\documentclass[12pt]{article}
\usepackage{amsmath, amsfonts, amssymb, latexsym, wasysym, graphicx, mathtools,  wasysym, enumitem, pifont}

\usepackage[all]{xy}

\usepackage{hyperref}
\hypersetup{colorlinks,citecolor=blue,linkcolor=blue}

\title{The Higher Connectivity at Infinity of Mapping Class Groups}
\author{Michael Mihalik}

\newtheorem{theorem}{Theorem}[section]

\newtheorem{lemma}[theorem]{Lemma}

\newtheorem{corollary}[theorem]{Corollary}

\newtheorem{remark}[theorem]{Remark}

\newenvironment{proof}{\addvspace{12pt}\noindent{\bf Proof:}}{
$\Box$\par\addvspace{12pt}}

\numberwithin{equation}{section}

\newcounter{definitionnum}

\date{\today}

\begin{document}

\maketitle

\begin{abstract} 
The higher connectivity at infinity for mapping class groups of surfaces with boundary components and punctures is understood with the exceptions of the mapping class groups for the closed surfaces of genus 3 and 4. In this paper we prove a general simply connected at infinity result for finitely presented groups that implies all mapping class groups of closed surfaces of genus $\geq 3$ are simply connected at infinity. As these groups are duality groups the Proper Hurewicz Theorem implies that they are $(n-2)$-connected at infinity where $n$ is the dimension of the group. Combining this result with earlier work we give a complete list of all mapping class groups and their connectivity at infinity.
\end{abstract}

\maketitle

\section{Introduction}
Simple connectivity at infinity is perhaps the most important asymptotic invariant of finitely presented groups and open manifolds. It is a quasi-isometry invariant for groups \cite{BR93}. For manifolds of dimension $\geq 3$ this invariant completely determines whether or not a contractible open manifold is homeomorphic to Euclidean space and when combined with the Proper Hurewicz Theorem, it can be used to determine the higher connectivity at infinity of certain groups. An open contractible (topological, differentiable or PL) manifold of dimension $\geq 3$ is homeomorphic to Euclidean space if and only if it is simply connected at infinity. This result is a combination of the work of a number of authors. It was first proved by J. Stallings \cite{St62} for PL-manifolds in dimensions $\geq 5$. E. Luft \cite{Luf67} and L. Sibenmann \cite{Sie68} produced topological versions of Stallings Theorem. Once the Poincare Conjectures were proved in dimensions 3 and 4, these results were extended to dimension 3 (see \cite{HP70} and \cite{HP71}) and dimension 4 (see Corollary 1.2 of M. Freedmann \cite{Fre82}). M. Davis \cite{Davis83} used right angled Coxeter groups to produce closed manifolds with infinite fundamental group in dimensions $n\geq 4$, whose universal covers were contractible but not homeomorphic to $\mathbb R^n$. Davis did this by showing these fundamental groups are not simply connected at infinity.

The following is our Main Theorem:

\begin{theorem}\label{Main} Suppose the group $G$ has a finite presentation $\langle S:R\rangle$ and $G$ contains a free abelian subgroup $A$ of rank $\geq 3$ with free generating set $T$ satisfying (1) and (2). Then $G$ is simply connected at infinity.  

\noindent  (1) For $r\in R$ there is $t_r\in T$ such that $t_r$ commutes with each letter of $r$.

\noindent  (2) If $s\in S$ is a letter of $r\in R$ then some $z\in T-\{t_r\}$ commutes with $s$. 
\end{theorem}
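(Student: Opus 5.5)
Let $X$ be the Cayley 2-complex of $\langle S:R\rangle$. It suffices to show the following: for every compact $C\subset X$ there is a compact $D\supset C$ such that every edge loop in $X-D$ is null-homotopic in $X-C$. The plan is to use the rank $\geq 3$ abelian subgroup $A$ as a source of ``room at infinity''. Its Cayley graph on $T$, sitting inside $X$ via the flats spanned by the commutation relations, is a copy of $\mathbb Z^k$ with $k\geq 3$, and $\mathbb Z^k$ is simply connected at infinity for $k\geq 3$. We may assume the relations $[t,t']$ for $t,t'\in T$ are available, after adding them to $R$ and enlarging $S$ to contain $T$. This is legitimate because the hypotheses on $R$ are only used for the original relators, and simple connectivity at infinity does not depend on the finite presentation.

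\textbf{Step 1: reduce to pushing loops.} Take a loop $\alpha$ in $X-D$. It is null-homotopic in $X$ by a van Kampen diagram, which is a union of relator 2-cells $g\,r$. We will show each such cell can be ``pushed off'' to infinity in a controlled way. Concretely, for a relator $r$ based at $g$, hypothesis (1) gives $t_r$ commuting with each letter of $r$. So there is an annulus (a product strip) between the cell $g r$ and its translate $g t_r^{n} r$, built from commutator cells $[s,t_r]$ for the letters $s$ of $r$. Choose $n$ large, with sign chosen so the translate, the strip, and the $t_r$-paths leave $C$. Then the loop $\alpha$, written as a product of conjugates of relators, is homotopic in $X-C$ to a product of conjugates of pushed relators, provided the connecting paths can also be pushed.

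\textbf{Step 2: push the vertices and edges of $\alpha$.} Each vertex $v$ of $\alpha$ must be connected to far-away points by a path avoiding $C$. Each edge $e=(v,vs)$ then needs a homotopy between ``push $v$ by a power of some generator, cross, push back''. Hypothesis (2) is designed for this. For each letter $s$ of $r$, there is $z\in T-\{t_r\}$ commuting with $s$, so the edge $e$ can be translated along $z$ through $[s,z]$-strips. Since $t_r$ and $z$ commute and are distinct generators of a free abelian group, we can move from the $t_r$-direction to the $z$-direction inside a flat $g\langle t_r,z\rangle\cong\mathbb Z^2$, and the square cells there are commutator relators. Far from $C$, each vertex's $A$-coset $vA$ is a $\mathbb Z^{\geq 3}$-flat. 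By properness of the inclusion $A\to G$ (undistorted is not needed, only properness), the part of $vA$ meeting $C$ is compact. Because $\mathbb Z^k$, $k\geq3$, has one end and is simply connected at infinity, any two far rays and loops in $vA$ outside a large ball can be connected and filled outside $C$. So all the pushing directions at a vertex are interchangeable away from $C$.

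\textbf{Step 3: assemble.} For the diagram of $\alpha$, push each vertex far along a chosen direction of $T$. Adjacent choices are reconciled using the $\mathbb Z^3$ flats as in Step 2, each edge is pushed using (2), and each 2-cell is pushed using (1). This yields a null-homotopy of $\alpha$ in $X-C$, once $D$ is chosen to contain a large neighborhood of $C$. That neighborhood must be large enough that edges and vertices in $X-D$ can begin their pushes without hitting $C$; it is uniform because $S$, $R$, $T$ are finite.

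The main obstacle is Step 2. We must check that the push of an edge along $z$ and the pushes of its endpoints along $t_r$ are compatible, and that the mediating homotopies in the flats $vA$ avoid $C$ for vertices $v$ outside $D$. The difficulty is that $v$ lies outside $D$ while $vA$ may still pass through $C$. Here rank $\geq 3$ is essential: in $\mathbb Z^k$ with $k\geq 3$, the complement of a compact set is simply connected near infinity, so any loop made of push-paths can be filled avoiding $C$. I would first try to get uniform bounds, using finitely many cosets $vA$ meeting $C$ and a uniform ball in each.
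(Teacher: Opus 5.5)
\textbf{There is a genuine gap: the part of the van Kampen diagram $V$ that lies in or near $C$ is never handled.}

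Your Step~1 asserts that for every relator cell $gr$ the sign of $t_r$ can be chosen so that the strip from $gr$ to $gt_r^{\pm n}r$ misses $C$. But only $\alpha=\partial V$ is known to be far from $C$. An interior 2-cell of $V$ that meets $C$ has its strip meeting $C$ for either sign.

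For the same reason, homotoping each conjugate $b_i r_i b_i^{-1}$ separately does not work. It drags the connecting paths $b_i$, which run through the diagram, across $C$.

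Your Step~2 argument is stated only for vertices far from $C$ (vertices of $\alpha$, or $v\notin D$). So nothing handles the interior edges and vertices of $V$. That is exactly where cells pushed in different directions $t_r^{\pm1}$ meet and must be reconciled.

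The missing organizing idea is the paper's: thicken $V$ to a 3-ball $B^3$, built from the following pieces.
\begin{itemize}
\item Prisms $C\times[0,N]$ over the cells, in the assigned direction.
\item Cubes $e\times[0,N]^2$ in the $\langle x,y\rangle$-flat over each edge whose two cells carry directions $x\neq y^{\pm1}$. When $y=x^{-1}$, the cube is routed through a third $z$ supplied by hypothesis (2).
\item Folds, cubes $[0,N]^3$, or a prism in a third $T$-direction at interior vertices.
\end{itemize}
Then only $\partial B^3-\mathrm{int}(V)$ has to avoid $C$. The prisms over interior cells may pass through $C$ harmlessly.

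The pieces that must avoid $C$ are controlled in two ways. First, the prisms over cells meeting $\partial V$ avoid $C$ by Lemma~\ref{L6}. That lemma uses the sign rule $(\dagger)$ together with Lemma~\ref{L2}: for $v$ far from $\ast$, the rays $v\cdot r_x$ and $v\cdot r_{x^{-1}}$ cannot both come near $\ast$. You assert that a good sign exists without giving this argument. Second, all the far faces avoid $C$ because $N$ is chosen large depending on $V$ (Lemma~\ref{L5}).

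\textbf{Your flat argument could work inside that framework.} It is a legitimate alternative to the paper's vertex case analysis (the folding, Lemma~\ref{L8}, and the face adjustment of Figure~\ref{Fig3}). At each vertex $v$ of $V$, the following pieces form a loop in $vA\cong\mathbb Z^k$:
\begin{itemize}
\item the far corners $vt^{\pm N}$;
\item the far arcs of the edge patches, all at $\ell^1$-distance $\geq N$ from $v$;
\item at a boundary vertex, the two side rays.
\end{itemize}
This loop misses a $d_A$-neighbourhood of $C\cap vA$. By properness, $C\cap vA$ has uniformly bounded $d_A$-diameter. Since $k\geq 3$, the loop then bounds a disk of commutator squares in $vA$ that misses $C$. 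This works even for interior vertices lying in $C$, provided $N$ is large relative to $V$.

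However, the proposal neither sets up this 3-dimensional structure nor resolves the obstacle it names in Step~2, so as written it is not a proof.
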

First observe that if $\langle S:R\rangle$ is a finite presentation of a group and $s\in S$ does not appear in any relation of $R$, then $G$ splits as a free product $\mathbb Z_s\ast \langle S-\{s\}\rangle$ and $G$ is simply connected at infinity if and only if $\langle S-\{s\}\rangle$ is simply connected at infinity. On the other hand, if $\langle S:R\rangle$ satisfies the hypothesis of Theorem \ref{Main} and for each $s\in S$ there is $r_s\in R$ such that $s$ is one of the letters of $r_s$, then $G$ is one-ended (see \S \ref{PMT}). 

The principal application of this theorem is to mapping class groups. Let $F=F_{g,r}^s$ be an orientable surface of genus $g$ with $s$ punctures and $r$ boundary components. The {\it mapping class group} $\Gamma_{g,r}^s$  is the group of isotopy classes of orientation preserving diffeomorphisms of $F$ which preserve the punctures of $F$ individually and restrict to the identity on $\partial F$. Here the isotopies keep $\partial F$ fixed point-wise. It follows from geometry results in the 1960's (W. Baily \cite{B60} and Deligne-Mumford \cite{DM69}) that $\Gamma_{g,0}^0$ is finitely presented. (See \cite{FM12}, P128).
Also see \cite{McC75} and Theorem 4.3.D of N. Ivanov's manuscript - Mapping Class Groups (December 21, 1998) 

\noindent https://s3.amazonaws.com/nikolaivivanov/Ivanov1999MappingClassGroups.pdf. 

A straightforward corollary of Theorem \ref{Main} is:
\begin{corollary} \label{MCG34}
The mapping class groups for the closed surfaces of genus $g\geq 3$ are simply connected at infinity.
\end{corollary}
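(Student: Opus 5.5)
The plan is to apply Theorem \ref{Main} to a concrete finite presentation of $\Gamma_{g,0}^0$, $g\geq 3$, in which every generator is a Dehn twist about a nonseparating simple closed curve. The presentation I would use is Wajnryb's presentation (or the Humphries generators with the relations of Gervais/Wajnryb, cf. \cite{FM12}).

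Its generators are Dehn twists $t_{c}$ along the curves of the Humphries chain $c_0,c_1,\dots,c_{2g}$, or a slightly larger finite set of nonseparating curves. Its relations are of the following kinds:
\begin{enumerate}[label=(\roman*)]
\item commutation relations $[t_a,t_b]=1$ for disjoint $a,b$;
\item braid relations $t_at_bt_a=t_bt_at_b$ for $a,b$ meeting once;
\item the $3$-chain relation;
\item the lantern relation;
\item for closed surfaces, the hyperelliptic-type relation.
\end{enumerate}
The key observation is that every Dehn twist $t_c$ commutes with $t_d$ whenever $c\cap d=\emptyset$. So condition (1) of Theorem \ref{Main} for a relation $r$ amounts to finding a curve in a prescribed family disjoint from all curves appearing in $r$. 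Condition (2) amounts to finding, for each letter $s=t_c$ of $r$, a second curve of the family disjoint from $c$.

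Step one is to choose $T$ to be Dehn twists about a family of pairwise disjoint, non-isotopic nonseparating curves $d_1,\dots,d_k$ with $k\geq 3$, for instance $k=g$ meridian-type curves or a pants-decomposition subfamily. Twists about pairwise disjoint non-isotopic essential curves generate a free abelian group of rank $k$, so $A=\langle T\rangle\cong\mathbb Z^k$ with $k\geq 3$ once $g\geq 3$. I am free to enlarge the generating set $S$ by adding the $t_{d_i}$ themselves, together with defining relations expressing them in the original generators. These extra relations must also be checked, so the added curves should be chosen adjacent to a small region. Alternatively, and more cleanly, I would conjugate the presentation by suitable mapping classes, or pick several disjoint families in different positions, as long as $T$ is a single family.

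Step two is the verification. Each relation of types (i)--(iv) involves curves supported in a subsurface of genus at most $2$ with a few boundary components (the lantern lives in a sphere with four holes, the $3$-chain in genus one with two boundaries). With $g\geq 3$ there is room, outside such a subsurface, for a curve $d_i$ of the family, which gives (1). For (2), each single curve $c$ misses all but at most two of the $d_i$. Since $k\geq 3$, some $d_j\neq d_{t_r}$ is disjoint from $c$.

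The main obstacle is relations that are \emph{global}, most notably the hyperelliptic relation $[t_{c_{2g}}\cdots t_{c_1}t_{c_1}\cdots t_{c_{2g}},\,t_{c_0}]=1$ (or the analogous closing relation in Wajnryb's presentation). Its letters involve curves filling the whole surface, so no twist about a nonseparating curve commutes with all of them. For this relation I would try two remedies:
\begin{itemize}
\item Replace it by an equivalent relation of bounded support. In Gervais' presentation, for example, all relations are lanterns, chains and commutations supported in genus $\leq 2$ subsurfaces, which requires more generators but only local relations. This is where $g\geq 3$ (rather than $g=2$) is essential.
\item Failing that, use the Gervais infinite presentation, restricted to a finite subset that still presents the group (Gervais/Luo prove finitely many local relations suffice). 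Then choose, for each relation, a disjoint curve among a fixed family $T$ of three disjoint nonseparating curves spread around the surface, so that every genus-$\leq 2$ support misses at least one of them.
\end{itemize}
Arranging a single $T$ of rank $\geq 3$ compatible with all supports simultaneously is the delicate combinatorial step.
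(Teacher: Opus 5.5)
Your proposal stops where the content of the corollary lies. Given Theorem \ref{Main}, the whole proof is to exhibit one finite presentation and one free basis $T$ of a rank-$3$ free abelian subgroup, and then check (1) and (2) relation by relation. You leave this as ``the delicate combinatorial step'' and never name $T$. The heuristics offered in its place do not work:
\begin{itemize}
\item That a relation is supported in a genus-$\leq 2$ subsurface only gives room for \emph{some} curve disjoint from that support. It does not put one of your fixed $d_i$ outside it, and (1) needs a single $T$ serving every relation at once.
\item Your count for (2) is off by one. Every letter $s$ of $r$ automatically commutes with $t_r$. So if $|T|=3$ (e.g.\ $g=3$ with $k=g$) and $s$ fails to commute with the other two members of $T$, then no $z\in T-\{t_r\}$ exists. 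What is needed is that every generator occurring in a relation commute with at least two members of $T$. Your claim that a curve meets at most two of the $d_i$ is also unjustified.
\item Enlarging $S$ by the $t_{d_i}$ is unnecessary. Theorem \ref{Main} only requires $T\subset G$, since the Tietze expansion happens inside its proof. Putting defining relations $t_{d_i}=w_i$ into $R$ would only create new relations that must themselves satisfy (1).
\end{itemize}

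The paper closes this gap with an explicit choice. You were right that a global relation like Wajnryb's closing relation is incompatible with (1), and that a presentation with only local relations is needed. The paper uses Gervais' presentation \cite{Ge01}, whose relations are commutation, braid and star relations. It takes $T=\{c_{1,2},c_{3,4},c_{1,3}\}$, three pairwise commuting \emph{generators}, with the following non-commutations:
\begin{itemize}
\item $c_{1,2}$ fails to commute only with $b_1$;
\item $c_{3,4}$ fails to commute only with $b_2$;
\item $c_{1,3}$ fails to commute only with $a_2,c_{2,4},c_{2,1}$.
\end{itemize}
Hence every generator fails to commute with at most one member of $T$. Each two-letter relation then gets both a $t_r$ and a $z$ by pigeonhole; $b_1b_2=b_2b_1$ is forced to take $t_r=c_{1,3}$. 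The star relations contain neither $b_1$ nor $b_2$, so they take $t_r=c_{1,2}$ and $z=c_{3,4}$. These three generators have the same commutation pattern in Gervais' presentation for every $g\geq 3$. Without an explicit $T$ with this property, your argument does not establish the corollary.
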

These groups are duality groups of dimension $4g-5$. Combining Corollary \ref{MCG34} and the Proper Hurewicz Theorem  ([Theorem 17.1.6, \cite{G}]) implies they are $4g-7$ connected at infinity. For $g\geq 5$, the mapping class groups of closed surfaces are known to satisfy an even stronger condition than simple connectivity at infinity (see  [Theorem 3.3, \cite{ABDDY12}]). The approach used to prove that result has similarities to the proof of our main theorem.
 
 The connectivity at infinity of mapping class groups other than $\Gamma_{3,0}^0$ and $\Gamma_{4,0}^0$ was established in [Theorem 2.4.55, \cite{Man26}]. For $2g+s+t>2$, define $d(g,0,0)=4g-5$; $d(g,r,s)=4g+2r+s-4$ when $g>0$ and $r+s>0$; and $d(0,r,s)=2r+s-3$. Combining with our results produces:
 \begin{theorem}\label{MCGSCINF}
 Let $\Gamma_{g,r}^s$ be the mapping class group of the surface of genus $g$ with $s$ punctures and $r$ boundary components. 
 
1)  $\Gamma_{g,r}^s$ is finite if and only if $g=0$, $r=0$ and $s\leq 3$,  or $g=0$, $r=1$ and $s=0$.

2) $\Gamma_{g,r}^s$ is infinite, virtually free (and hence simply connected at infinity) if and only if $(g,r,s) \in \{ (0,0,4), (0,1,1),  (0,1,2), (0,2,0), (1,0,0), (1,0,1) \}$. 

3) $\Gamma_{g,r}^s$ is virtually an extension of two non-trivial finitely generated free groups  if and only if $(g,r,s) \in \{(0,0,5), (0,1,3), (0,2,1), (1,1,0), (1,0,2) \}$. In this case, $\Gamma_{g,r}^s$ is 1-ended and semistable at infinity, but not simply connected at infinity. 

4) Otherwise, $\Gamma_{g,r}^s$ is $d(g,r,s)-2$ connected at infinity. In particular, all remaining mapping class groups are 1-ended and simply connected at infinity. 
 \end{theorem}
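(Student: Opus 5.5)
Theorem \ref{MCGSCINF} is essentially a compilation. Parts 1)--3) and all of 4) except the closed genus $3$ and $4$ cases are already established in [Theorem 2.4.55, \cite{Man26}]. So the plan is to quote that result for every triple $(g,r,s)$ other than $(3,0,0)$ and $(4,0,0)$, and then supply the two missing cases from Corollary \ref{MCG34} together with duality and the Proper Hurewicz Theorem.

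First I will check the bookkeeping in 1)--3). The finite list, the infinite virtually free list, and the virtually free-by-free list are the low-complexity cases. Here $\Gamma$ is trivial, finite, commensurable with $SL_2(\mathbb Z)$ or a free group, or (via the Birman exact sequence or the capping/point-pushing sequences) a free-by-free extension. All of these are recorded in \cite{Man26}. Two further facts are standard:
\begin{itemize}
\item Virtually free groups are simply connected at infinity, since their ends are Cantor sets or finite and their Cayley complexes are trees up to quasi-isometry.
\item A group that is an extension of infinite finitely generated free groups is one-ended, semistable, and not simply connected at infinity, by the classical result on extensions with infinite-ended quotient.
\end{itemize}
These are cited rather than reproved.

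For 4), I fix a triple not covered by 1)--3). If $(g,r,s)\neq(3,0,0),(4,0,0)$, the statement is exactly \cite{Man26}. For $g=3,4$ closed, Corollary \ref{MCG34} gives that $\Gamma_{g,0}^0$ is simply connected at infinity. I then pass to a torsion-free finite-index subgroup $H$, which exists by level structures. $H$ is a duality group of dimension $d=4g-5$ (Harer), so $H^i(H;\mathbb Z H)=0$ for $i\neq d$ and $H^d(H;\mathbb ZH)$ is torsion-free. One-endedness and simple connectivity at infinity pass to $H$ by quasi-isometry invariance \cite{BR93}. The Proper Hurewicz Theorem [Theorem 17.1.6, \cite{G}], applied to the universal cover of a finite $K(H,1)$ (which exists since $H$ is type $F$), lets me climb inductively. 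If $H$ is $(k-1)$-connected at infinity with $k\geq 2$, then the first possibly nonzero proper homotopy group at infinity is isomorphic to the proper homology at infinity. That homology is identified with $H^{k+1}(H;\mathbb ZH)$, which vanishes for $k+1<d$. This gives $(d-2)$-connectivity at infinity, i.e.\ $(4g-7)$, and the property transfers back to $\Gamma_{g,0}^0$ since it is a quasi-isometry invariant.

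The main obstacle is Corollary \ref{MCG34} itself. The plan assumes it, but it needs a finite presentation of $\Gamma_{g,0}^0$ (for instance Wajnryb's, in Humphries/Dehn twist generators) satisfying (1) and (2) of Theorem \ref{Main}. Concretely, for each relator one needs a twist about a curve disjoint from all curves involved, and such curves must come from a fixed rank-$\geq 3$ family of disjoint curves. For $g=3$ the braid, chain and lantern relators leave little room, so checking (1) there is the delicate step. If needed, I would first enlarge the generating set by conjugates so that each relator is supported in a proper subsurface missing one of three fixed disjoint nonseparating curves.
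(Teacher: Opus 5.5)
Your proposal is correct and follows the paper's route: quote \cite[Theorem 2.4.55]{Man26} for every triple except $(3,0,0)$ and $(4,0,0)$, and for those two combine Corollary \ref{MCG34} with virtual duality in dimension $4g-5$ and the Proper Hurewicz Theorem to get $(4g-7)$-connectivity at infinity. Your passage to a torsion-free finite-index subgroup and the inductive use of $H^{*}(H;\mathbb{Z}H)=0$ below degree $4g-5$, together with torsion-freeness of $H^{4g-5}(H;\mathbb{Z}H)$, just unpacks what the paper leaves implicit; the closing speculation about Wajnryb's presentation is not needed, since the paper obtains Corollary \ref{MCG34} from Gervais's presentation \cite{Ge01} with $T=\{c_{1,2},c_{3,4},c_{1,3}\}$.
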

 We establish some notation and give basic definitions of simple connectivity at infinity and van Kampen diagrams for finitely presented groups in \S \ref{SCIVK}. 
 Four principal algebraic lemmas are established in \S \ref{PL}. In \S \ref{OutL} we outline our argument. Beginning with a van Kampen diagram $V$ for a simple closed curve in a Cayley 2-complex for our group, we use the lemmas of \S \ref{PL} to attach 3-cells to the Cayley 2-complex and $V$. When finished $V$, along with its 3-cells will be a 3-ball $B^3$ and we map this 3-ball to our Cayley 3-complex in such a way so that on $\partial B^3-int(V)$ all cells are mapped ``far out" in the space. This establishes simple connectivity at infinity of our space and group. We prove our main theorem in \S \ref{PMT}. Useful presentations of mapping class groups of closed surfaces of genus $\geq 3$ given in \cite{Ge01} are described in \S \ref{PMCG}. We point out how these presentations satisfy the hypotheses of our main theorem so that these groups are simply connected at infinity.
 
 {\it Acknowledgement:} We are grateful for the help provided by Dan Margalit and Denis Osin. 
 
\section{Simple Connectivity at Infinity and van Kampen Diagrams} \label{SCIVK}
 A topological space $X$ is {\it simply connected at infinity} if for each compact set $C\subset X$ there is a compact set $D\subset X$ such that loops in $X-D$ are homotopically trivial in $X-C$. Suppose $X$ is a locally finite and connected CW-complex. If $A$ is a subcomplex of $X$ then define $St(A)$ (the {\it star} of $A$) to be the (full) subcomplex of $X$ with vertex set equal to the vertex set of $A$ along with all vertices that share an edge with a vertex of $A$. A cell of $X$ belongs to $St(A)$ if all of its vertices belong to $A$. So $St(A)$ is analogous to a ball of radius 1 about $A$. Inductively define $St^n(A)=St(St^{n-1}(A)$ for all integers $n\geq 1$. If $v$ and $w$ are vertices of $X$ then $v\in St^n(w)$ if and only if there is an edge path in $X$ between $v$ and $w$ of length $\leq n$. It is straightforward to see that $X$ is simply connected at infinity if and only if for some (any) vertex $\ast\in X$ and any integer $n>0$ there is an integer $M(n)$ such that edge path loops in $X-St^{M(n)}(\ast)$ are homotopically trivial in $X-St^n(\ast)$. We use this as our definition of simply connected at infinity in the remainder of the paper. A finitely presented group $G$ is {\it simply connected at infinity} if for some (equivalently any) connected finite complex $Y$ with $\pi_1(Y)=G$, the universal cover of $Y$ is simply connected at infinity. 
 If $\mathcal P=(S:R)$ is a finite presentation for a group $G$, let $\Gamma(\mathcal P)$ be the Cayley 2-complex for $\mathcal P$. So the vertex set of $\Gamma(\mathcal P)$ is $G$, the 1-skeleton of $\Gamma(\mathcal P)$ is the Cayley graph of $G$ with respect to $S$ (so each edge is directed and labeled by an element of $S$). For each vertex $v\in \Gamma(\mathcal P)$ there is a 2-cell with edge path boundary labeled by $r$ for each $r\in R$. The space $\Gamma(\mathcal P)$ is simply connected and $G$ acts freely and cocompactly (on the left) of $\Gamma(\mathcal P)$ by covering transformations. Hence in order to show that $G$ is simply connected at infinity, it suffices to show that $\Gamma(\mathcal P)$ is simply connected at infinity. In fact, if $\ast$ is the identity vertex of $\Gamma(\mathcal P)$ then $G$ is simply connected at infinity if for any integer $n$ there is an integer $M(n)$ such that any edge path loop in $\Gamma(\mathcal P)-St^{M(n)}(\ast)$ is homotopically trivial in $\Gamma(\mathcal P)-St(\ast)$.
 
 Let $F(S)$ be the free group on the set $S$ and $q:F(X)\to G=\langle S:R\rangle$ be the quotient map taking $s\to s$ for each $s\in S$. The kernel of $q$ is $N(R)$, the normal closure of $R$ in $F(S)$. Each element of $N(R)$ is a product of conjugates of elements in $R$ and hence if $a\in N(R)$ is reduced in $F(S)$, then there is a planar diagram $V(a)$ for $a$ called a {\it van Kampen diagram} for $a$. If $a=b_1r_1b_1^{-1}\cdots b_nr_nb_n^{-1}\in N(R)$ (with $r_i\in R^{\pm1}$) simply draw the obvious diagram for each  $b_ir_ib_i^{-1}$ in the plane, at a base point $\ast$, in circular order around $\ast$. Then ``fold" out consecutive letters of the form $tt^{-1}$. The result is a planar diagram $V(a)$, with boundary labeled by the reduced word for $a$, each edge is labeled by an element of $S^{\pm1}$ and each 2-cell by an element of $R$. Given a vertex $v(\in G)$ of $\Gamma(\mathcal P)$ there is an obvious map of $V(a)$ to $\Gamma(\mathcal P)$ that takes $\ast$ to $v$. 
 
 Suppose $\alpha$ is an edge path loop in $\Gamma(\mathcal P)-St^{M(n)}(\ast)$ and each simple closed sub-loop of $\alpha$ is homotopically trivial in $\Gamma(\mathcal P)-St^n(\ast)$ then $\alpha$ is homotopically trivial in $\Gamma(\mathcal P)-St^n(\ast)$. Hence we only need consider simple closed curves in our proof of simple connectivity at infinity for $G$. If $\alpha$ is a simple closed curve in  $\Gamma(\mathcal P)-St^{M(n)}(\ast)$ then let $w(\alpha)\in N(R)$ be its edge path label. The van Kampen diagram $V=V(w(\alpha))$ is a 2-disk and has boundary a simple edge path loop labeled $w(\alpha)$. The boundary of each 2-cell is labeled by an element of $R$. There is a map $Q:V\to \Gamma(\mathcal P)$ that maps the boundary of $V$ to $\alpha$, each vertex, edge and 2-cell of $V$ to a vertex, edge and 2-cell (respectively) of $\Gamma(\mathcal P)$. It is convenient to label vertices, edges and 2-cells of $V$ by their images under $Q$. We only use $Q(v)(=v)$ to make it clear that we are in $\Gamma(\mathcal P)$.

 \section{Principal Lemmas}\label{PL}

In this section, $G$ is a group with finite generating set $S=\{s_1,\ldots s_n\}$. Let $\Gamma$ be the Cayley graph of $G$ with respect to $S$ and let $\ast$ be the identity vertex of $\Gamma$. If $x\in \{s_1^{\pm 1}, \ldots s_n^{\pm 1}\}$ has infinite order then let $r_x$ be the edge path ray in $\Gamma$, beginning at $\ast$ where each edge is labeled $x$ and let $l_x$ be the edge path line at $\ast$ where each edge is labeled $x$. Note that if $v(\in G)$ is a vertex of $\Gamma$ then $v\cdot r_x$ is the edge path ray at $v$ with all edges labeled $x$.  If $v$ and $w$ are vertices of $\Gamma$ (elements of $G$) then $d(v,w)$ is defined to be the length of the shortest path between them in $\Gamma$. For $g\in G$ we define $|g|=d(g,\ast)$ where $\ast$ is the identity vertex of $\Gamma$. If $A\subset S$, and $v^{-1}w\in \langle A\rangle$ then $d_A(v,w)$ is the length of a shortest edge path between $v$ and $w$, each of whose edges is labeled by an element of $A^{\pm 1}$. Define $|a|_A=d_A(a,\ast)$ for $a\in A$.   
 

Next suppose that $\{x,y,z\}\subset S^{\pm1}$ generates a subgroup of $G$ isomorphic to $\mathbb Z^3$. Define the {\it half plane} $P(x^+,y)$ (respectively $P(x^-,y)$) to be the full subgraph of $\Gamma$ with vertices in the set $\{x^iy^j \ |\ i\geq 0$\} (respectively $i\leq 0$). Observe that if $v$ is a vertex of $\Gamma$ then 
$$v\cdot l_y= (v\cdot P(x^+,y))\cap (v\cdot P(x^-,y))$$ 
and for any $p\in v\cdot P(x^+,y)$ (respectively $q\in v\cdot P(x^-,y)$) there is an edge path from $v\cdot l_y$ to $p$ (respectively to $q$) with each edge label equal to $x$ (respectively $x^{-1}$). 
Define the {\it half space} $R(x^+,y,z)$ (respectively $R(x^-,y,z)$), to be the full subgraph of $\Gamma$ with vertices in the set $\{x^iy^jz^k \ |\ i\geq 0$\} (respectively $i\leq 0$).

\begin{figure}
\vbox to 3in{\vspace {-2in} \hspace {-.7in}
\hspace{-1 in}
\includegraphics[scale=1]{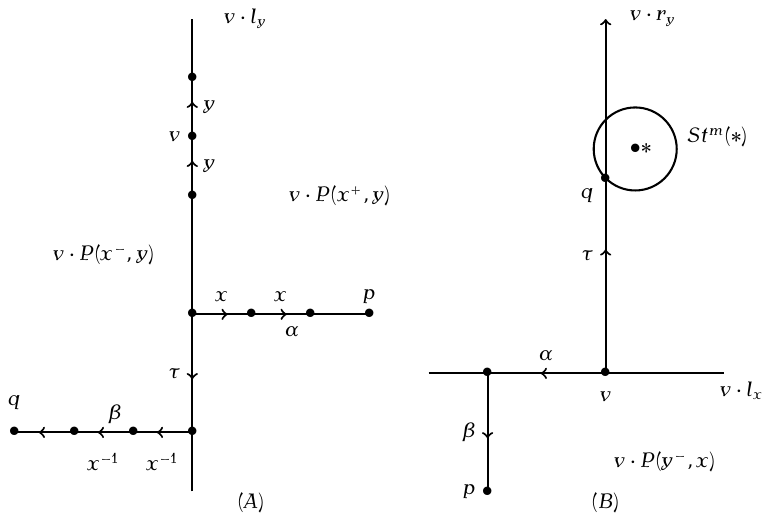}
\vss }
\vspace{.2in}
\caption{Half Planes} 
\label{Fig1}
\vspace{-.1in}
\end{figure}

\begin{lemma}\label{L1}
Given an integer $m$ there is an integer $M_1(m)$ such that if $\{x,y\}\subset S$ such that $\{x,y\}$ generates a subgroup of $G$ isomorphic to $\mathbb Z^2$, and  $v\in G$ such that $(v\cdot l_y)\cap St^{M_1(m)}(\ast)=\emptyset$, then at most one of the half planes $v\cdot P(x^+,y)$ or $v\cdot P(x^-,y)$ intersects $St^m(\ast)$. 
\end{lemma}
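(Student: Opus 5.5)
The plan is to argue by contradiction. If both half planes come close to $\ast$, the $\mathbb Z^2$-coordinates of the two nearby points must differ by a uniformly bounded amount. Consequently the line $v\cdot l_y$ passes uniformly close to $\ast$. The constant $M_1(m)$ will come from a finiteness argument over the finitely many pairs $\{x,y\}\subset S$ and the finite ball of radius $2m$ in $G$.

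First I fix the constant. For each pair $\{x,y\}\subset S$ generating a copy of $\mathbb Z^2$, the map $(a,b)\mapsto x^ay^b$ is injective. The ball $\{g\in G: |g|\le 2m\}$ is finite since $S$ is finite. Hence the set
$$E_{x,y}(m)=\{(a,b)\in\mathbb Z^2 : |x^ay^b|\le 2m\}$$
is finite. Let $N_{x,y}(m)=\max\{|a| : (a,b)\in E_{x,y}(m)\}$ and let $N(m)$ be the maximum of $N_{x,y}(m)$ over the finitely many such pairs in $S$. I would set $M_1(m)=m+N(m)+1$. This uniformity over the pairs $\{x,y\}$ is the only real point to be careful about, and it is handled by finiteness of $S$.

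Now suppose $v\cdot l_y$ misses $St^{M_1(m)}(\ast)$, but both half planes meet $St^m(\ast)$. Then there are vertices $p=vx^iy^j$ with $i\ge 0$ and $q=vx^{-k}y^l$ with $k\ge 0$, and $|p|,|q|\le m$. Since $x$ and $y$ commute,
$$p^{-1}q=y^{-j}x^{-i-k}y^{l}=x^{-(i+k)}y^{l-j},$$
and $|p^{-1}q|\le d(p,\ast)+d(\ast,q)\le 2m$. So $(-(i+k),l-j)\in E_{x,y}(m)$, giving $0\le i\le i+k\le N(m)$.

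Finally, the vertex $vy^j=px^{-i}$ lies on $v\cdot l_y$. It is joined to $p$ by an edge path of $i$ edges labeled $x^{-1}$, so $|vy^j|\le m+i\le m+N(m)<M_1(m)$. Since $St^n(\ast)$ contains every vertex within edge distance $n$ of $\ast$, the line $v\cdot l_y$ meets $St^{M_1(m)}(\ast)$. This contradicts the hypothesis, so at most one of the half planes meets $St^m(\ast)$.
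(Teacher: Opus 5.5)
Your proof is correct and is essentially the paper's argument. Both rest on the finiteness of the $2m$-ball, which uniformly bounds the $\langle x,y\rangle$-coordinates of $p^{-1}q$ for $p,q\in St^m(\ast)$, and both take $M_1(m)$ to be $m$ plus that bound. The differences are cosmetic: the paper bounds the full $\{x,y\}$-word length by $L_1(m)$ and gets its contradiction from the long $\{x,y\}$-geodesic $(\alpha^{-1},\tau,\beta)$ from $p$ to $q$, whereas you bound only the $x$-exponent and directly exhibit the vertex $vy^j=px^{-i}$ of $v\cdot l_y$ within distance $m+N(m)$ of $\ast$.
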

\begin{proof} 
There are only finitely many pairs $x,y\in S$ such that the subgroup $\langle x,y\rangle$ is isomorphic to $\mathbb Z^2$. Hence it suffices to prove the lemma for one such pair. There are only finitely many elements  $a\in \langle x,y\rangle$ such that $|a|\leq 2m$. Choose $L_1(m)$ such that $|a|_{\{x,y\}}< L_1(m)$ for each of these elements. Define $M_1(m)=L_1(m)+m$ and suppose $(v\cdot l_y)\cap St^{M_1(m)}(\ast)=\emptyset$. (See Figure \ref{Fig1}(A).) If $p\in (v\cdot P(x^+,y))\cap St^m(\ast)$ then there is an edge path $\alpha$ in $v\cdot P(x^+,y)$ from $v\cdot l_y$ to $p$ with each edge labeled $x$. Note that $|\alpha |\geq L_1(m)$ (since $(v\cdot l_y)\cap St^{L_1(m)}(St^m(\ast))=\emptyset $ and $p\in St^m(\ast)$).  If $q\in (v\cdot P(x^-,y))\cap St^m(\ast)$ then there is an edge path $\beta$ in $v\cdot P(x^-,y)$ from $v\cdot l_y$ to $q$ with each edge labeled $x^{-1}$. Note that $|\beta |\geq L_1(m)$. If $\tau$ is a shortest edge path in $v\cdot l_y$ from the initial point of $\alpha$ to the initial point of $\beta$ then the edge path $(\alpha^{-1}, \tau, \beta)$  from $p$ to $q$ is geodesic in the letters $x,y$ and has length $| \alpha |+|\tau|+|\beta|>2L_1(m)+|\tau | (>L_1(m))$. This is impossible since $d(p,q)\leq 2m$ and $d_{\{x,y\}}(p,q)>L_1(m)$ . 
\end{proof}

\begin{lemma}\label{L2}
Suppose $\{x,y\}\subset S$ such that $\{x,y\}$ generates a subgroup of $G$ isomorphic to $\mathbb Z^2$. If  $v\in \Gamma-St^{M_1(m)}(\ast)$ and 
$(v\cdot r_y)\cap St^m(\ast)\ne\emptyset$, then  $(v\cdot P(y^-,x))\cap St^m(\ast)=\emptyset$. In particular, $(vy^n\cdot l_x)\cap St^m(\ast)=\emptyset$ for $n\leq 0$ and $v\cdot r_{y^{-1}}\cap St^m(\ast)=\emptyset$. 
\end{lemma}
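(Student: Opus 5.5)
The plan is to argue directly, in the same spirit as the proof of Lemma \ref{L1}, and reuse the constants $L_1(m)$ and $M_1(m)=L_1(m)+m$ defined there. Recall that $L_1(m)$ was chosen so that every $a\in\langle x,y\rangle$ with $|a|\leq 2m$ satisfies $|a|_{\{x,y\}}<L_1(m)$. Since $\langle x,y\rangle\cong\mathbb Z^2$ with free basis $\{x,y\}$, the $\{x,y\}$-word length of $y^ax^b$ is exactly $|a|+|b|$. This is the only structural fact about $\mathbb Z^2$ I need.

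Suppose, for a contradiction, that both intersections are nonempty. Pick $p=vy^k\in (v\cdot r_y)\cap St^m(\ast)$ with $k\geq 0$. Pick $q=vy^{j}x^i\in (v\cdot P(y^-,x))\cap St^m(\ast)$ with $j\leq 0$. Then $p^{-1}q=y^{j-k}x^i\in\langle x,y\rangle$ and $|p^{-1}q|\leq d(p,\ast)+d(\ast,q)\leq 2m$. Hence
$$|p^{-1}q|_{\{x,y\}}=|j-k|+|i|=k+|j|+|i|<L_1(m).$$
The key point is that $k\geq 0\geq j$, so there is no cancellation in the $y$-exponent. This is exactly why the ray must point in the positive $y$ direction while the half plane lies in the negative $y$ direction.

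Now bound $|v|$. Since $v=py^{-k}$, we get $|v|\leq m+k< m+L_1(m)=M_1(m)$, so $v\in St^{M_1(m)}(\ast)$. This contradicts the hypothesis $v\in\Gamma-St^{M_1(m)}(\ast)$. Therefore $(v\cdot P(y^-,x))\cap St^m(\ast)=\emptyset$.

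The "in particular" statements follow by containment. For $n\leq 0$, the line $vy^n\cdot l_x$ has vertices $vy^nx^i$, and the ray $v\cdot r_{y^{-1}}$ has vertices $vy^{n}$ with $n\leq 0$. Both vertex sets lie in the vertex set of $v\cdot P(y^-,x)$. Since these are full subgraphs, the edges are contained as well, so both are disjoint from $St^m(\ast)$.

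The only delicate step is recognizing that one should not try to invoke Lemma \ref{L1} with the roles of $x$ and $y$ swapped. That would require the line $v\cdot l_x$ to avoid $St^{M_1(m)}(\ast)$, which is not given. Instead the argument needs the direct length count above, where the sign condition $k\geq0\geq j$ supplies the needed additivity.
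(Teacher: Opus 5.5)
Your proof is correct and is essentially the paper's argument in contrapositive form. The paper notes that the $y$-segment from $v$ to the ray point has length $>L_1(m)$, and concatenates it with an $(x,y)$-geodesic to get an $(x,y)$-geodesic of length $\geq L_1(m)$ between two points of $St^m(\ast)$; you instead use $|p^{-1}q|_{\{x,y\}}<L_1(m)$ to force $k<L_1(m)$ and hence $|v|<M_1(m)$. Your explicit count $|j-k|+|i|=k+|j|+|i|$, using $k\geq 0\geq j$, is exactly what justifies the paper's claim that the concatenated path is an $(x,y)$-geodesic.
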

\begin{proof} Recall that $M_1(m)=L_1(m)+m$ and if $d(p,q)\leq 2m$ and $q\in p\cdot \langle x,y\rangle$ then $d_{\{x,y\}}(p,q)< L_1(m)$. Let $q$ be a vertex of $(v\cdot r_y)\cap St^m(\ast)$. Then the edge path $\tau$ from $v$ to $q$ along $v\cdot r_y$ has each edge labeled $y$ and is of length $>L_1(m)$. (See Figure \ref{Fig1}(B).) Say $p\in (v\cdot P(y^-,x))\cap St^m(\ast)$. An $(x,y)$-geodesic from $v$ to $p$ has the form $(\alpha, \beta)$ where each edge of $\alpha$ is labeled by $x$ or each edge is labeled by $x^{-1}$ and each edge of $\beta$ is labeled by $y^{-1}$. The edge path $(\tau^{-1}, \alpha, \beta)$ is an $(x,y)$-geodesic from $q$ to $p$. But the length of this $(x,y)$-geodesic is $\geq L_1(m)$. Since $p,q\in St^m(\ast)$, $d(p,q)\leq 2m$. This contradicts the definition of $L_1(m)$. 
\end{proof}

\begin{lemma}\label{L3}
Suppose $\{x,y,z\}\subset S^{\pm1}$ such that $\{x,y,z\}$ generates a subgroup of $G$ isomorphic to $\mathbb Z^3$. Given an integer $m>0$ there is an integer $M_2(m)>M_1(m)$ such that if  $v\in \Gamma-St^{M_2(m)}(\ast)$ and 
$(v\cdot r_x)\cap St^m(\ast)\ne\emptyset$, then  
$$(v\cdot R(x^-,y,z))\cap St^m(\ast)=\emptyset.$$ 
\end{lemma}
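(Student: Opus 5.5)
Lemma \ref{L3} is the three-dimensional analogue of Lemma \ref{L2}, and I would prove it the same way: compare word lengths inside the $\mathbb Z^3$ subgroup $\langle x,y,z\rangle$. Only finitely many triples $\{x,y,z\}\subset S^{\pm1}$ generate a copy of $\mathbb Z^3$, so it suffices to treat one triple and take the maximum of the resulting constants.

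For a fixed triple, only finitely many elements $a\in\langle x,y,z\rangle$ satisfy $|a|\leq 2m$. I would choose $L_2(m)\geq L_1(m)$ with $|a|_{\{x,y,z\}}<L_2(m)$ for each such $a$, and set
\[
M_2(m)=L_2(m)+m+1 \quad (>M_1(m)).
\]
The key fact is the same as before. If $p,q\in St^m(\ast)$ and $p^{-1}q\in\langle x,y,z\rangle$, then $d(p,q)\leq 2m$, and hence $d_{\{x,y,z\}}(p,q)<L_2(m)$.

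Now suppose $v\notin St^{M_2(m)}(\ast)$ and $q=vx^k\in (v\cdot r_x)\cap St^m(\ast)$ with $k\geq 0$. Since $d(v,q)\leq k$ while $d(v,\ast)>M_2(m)$ and $q\in St^m(\ast)$, we get $k>L_2(m)$. Suppose also that some $p=vx^iy^jz^l$ with $i\leq 0$ lies in $St^m(\ast)$. In the free abelian group with basis $x,y,z$, the word length of $x^ay^bz^c$ is $|a|+|b|+|c|$. Therefore
\[
d_{\{x,y,z\}}(q,p)=|\,i-k\,|+|j|+|l|\geq k-i\geq k>L_2(m).
\]
This contradicts the key fact, since both $p$ and $q$ lie in $St^m(\ast)$. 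So $(v\cdot R(x^-,y,z))\cap St^m(\ast)=\emptyset$.

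The only step needing care is the length formula in the subgroup. Here $x,y,z$ are elements of $S^{\pm1}$ forming a free basis of $\mathbb Z^3$, so $d_{\{x,y,z\}}$ is the $\ell^1$-norm of exponent vectors. This is what makes the $x$-exponent gap $k-i$ a lower bound for the distance, exactly as the $(x,y)$-geodesics were used in Lemma \ref{L2}. Everything else is routine.
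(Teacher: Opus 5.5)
Your proof is correct and is essentially the paper's argument: the paper also bounds $|a|_{\{x,y,z\}}<L_2(m)$ for the finitely many $a$ with $|a|\leq 2m$, then observes that the $x$-segment from $q$ back to $v$, followed by an $(x,y,z)$-geodesic into the half space, is an $(x,y,z)$-geodesic of length $>L_2(m)$, which is exactly your $\ell^1$-norm computation. Your choice $L_2(m)\geq L_1(m)$ and $M_2(m)=L_2(m)+m+1$ also makes the stated inequality $M_2(m)>M_1(m)$ explicit, which the paper's choice $M_2(m)=L_2(m)+m$ leaves unverified.
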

\begin{proof} There are only finitely many triples $x,y,z\in S$ such that the subgroup $\langle x,y,z\rangle$ is isomorphic to $\mathbb Z^3$. Hence it suffices to prove the lemma for one such triple. There are only finitely many elements  $a\in \langle x,y,z\rangle$ such that $|a|\leq 2m$. Choose $L_2(m)$ such that $|a|_{\{x,y,z\}}< L_2(m)$ for each of these elements. Define $M_2(m)=L_2(m)+m$. If $d(p,q)\leq 2m$ and $q\in p\cdot \langle x,y,z\rangle$ then $d_{\{x,y,z\}}(p,q)< L_2(m)$. Let $q$ be a vertex of $(v\cdot r_x)\cap St^m(\ast)$. Then the edge path $\tau$ from $v$ to $q$ along $v\cdot r_x$ has each edge labeled $x$ and is of length $>L_2(m)$. (See Figure \ref{Fig1}(B) with $v\cdot r_y$ replaced by $v\cdot r_x$.) Say $p\in (v\cdot R(x^-,y,z))\cap St^m(\ast)$. An $(x,y,z)$-geodesic from $v$ to $p$ has the form $(\alpha, \beta,\gamma)$ where each edge of $\alpha$ is labeled by $x^{-1}$ and each edge of $\beta$ is labeled by $y$ or each by $y^{-1}$ and each edge of $\gamma$ is labeled by $z$ or each by $z^{-1}$. The edge path $(\tau^{-1}, \alpha, \beta,\gamma)$ is an $(x,y,z)$-geodesic from $q$ to $p$. But the length of this $(x,y,z)$-geodesic is $\geq L_2(m)$. Since $p,q\in St^m(\ast)$, $d(p,q)\leq 2m$. This contradicts the definition of $L_2(m)$. 
\end{proof}

\begin{lemma}\label{L5}
If $v\in \Gamma$ and $m>0$ then there exists an integer $T(v,m)>0$ such that for all $t\geq T(v,m)$ and any pair $\{x,y\}\subset S^{\pm 1}$ such that $\{x,y\}$ generates a subgroup of $G$ isomorphic to $\mathbb Z^2$, we have:
$(v\cdot r_x(t)\cdot P(x^+,y))\cap St^m(\ast)=\emptyset$. (Note that $r_x(t)=x^t$.)
\end{lemma}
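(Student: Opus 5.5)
Since $S^{\pm1}$ is finite, there are only finitely many pairs $\{x,y\}\subset S^{\pm1}$ generating a copy of $\mathbb Z^2$. It therefore suffices to produce, for a fixed such pair, an integer $T_{x,y}(v,m)$ that works for all $t\geq T_{x,y}(v,m)$. We then set $T(v,m)$ to be the maximum over the finitely many pairs. So fix $\{x,y\}$ with $\langle x,y\rangle\cong\mathbb Z^2$, and note that $x$ then has infinite order, so $r_x$ makes sense.

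The plan is to reduce to a statement about the abelian subgroup $\langle x,y\rangle$, as in the proofs of Lemmas \ref{L1} and \ref{L2}. The vertices of $v\cdot r_x(t)\cdot P(x^+,y)$ are the elements $vx^{t}x^iy^j=vx^{t+i}y^j$ with $i\geq 0$ and $j\in\mathbb Z$. Suppose such a vertex $p$ lies in $St^m(\ast)$. Then $|p|\leq m$, so $d(v,p)\leq |v|+m$. Now $v^{-1}p=x^{t+i}y^j\in\langle x,y\rangle$ and has word length at most $|v|+m$ in $\Gamma$. There are only finitely many elements of $\langle x,y\rangle$ of $\Gamma$-length at most $|v|+m$. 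So we may choose $L=L(v,m)$ with $|a|_{\{x,y\}}<L$ for every such element $a$.

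Since $\langle x,y\rangle\cong\mathbb Z^2$ is free abelian on $x,y$, we have $|x^ky^j|_{\{x,y\}}=|k|+|j|\geq |k|$. Here it matters that $\{x,y\}$ is a free basis; this is why we need $\mathbb Z^2$ and not merely a commuting pair. Hence $t+i\leq |x^{t+i}y^j|_{\{x,y\}}<L$. Taking $T_{x,y}(v,m)=L(v,m)$, every $t\geq T_{x,y}(v,m)$ gives $t+i\geq t\geq L$, which contradicts the previous inequality. Therefore no vertex of $v\cdot r_x(t)\cdot P(x^+,y)$ lies in $St^m(\ast)$.

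This suffices for the subgraph as well: $St^m(\ast)$ and the half plane are full subgraphs, so their intersection is empty once their vertex sets are disjoint. There is no real obstacle here. The one point needing care is justifying that the $\{x,y\}$-length of $x^ky^j$ is exactly $|k|+|j|$. This follows by abelianizing a word in $x^{\pm1},y^{\pm1}$: the exponent sums of $x$ and of $y$ are uniquely determined by the element because $x,y$ form a basis.
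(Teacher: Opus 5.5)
Your proof is correct and is essentially the paper's argument. The paper notes that the lines $v\cdot r_x(a)\cdot l_y$, $a>0$, are pairwise disjoint, so only finitely many of them meet the finite set $St^m(\ast)$; you make the same finiteness step quantitative through the $\{x,y\}$-length bound, in the style of Lemmas~\ref{L1}--\ref{L3}, with both versions resting on the finiteness of $St^m(\ast)$ and the well-defined $x$-exponent in $\langle x,y\rangle\cong\mathbb Z^2$ (that $x,y$ form a basis follows because $\mathbb Z^2$ is Hopfian).
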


\begin{proof}
There are only finitely many such pairs $x,y$ so it suffices to prove the lemma for one such pair. For distinct positive integers  $a,b$, the lines $v\cdot r_x(a)\cdot  l_y$ and $v\cdot r_x(b)\cdot l(y)$ are disjoint. Hence only finitely many of these lines can intersect $St^m(\ast)$. Choose $T(v,x,m)$ large enough so that for all $t\geq T(v,x,m)$, $(v\cdot r_x(t)\cdot l(y))\cap St^m(\ast)=\emptyset$. 
\end{proof}

\section{Outline of the Proof of the Main Theorem}\label{OutL}
Suppose $\mathcal P_1=\langle S,R_1\rangle$ is a finite presentation of $G$, the subgroup $\langle t_1,\ldots, t_p\rangle$ of $G$ is isomorphic to $\mathbb Z^p$ and for each $r\in R_1$ some $t_i$ commutes with every letter of $r$. Without loss, we may assume that if $r\in R_1$ then no subword (cyclically) of $r$ is trivial in $G$. Let $T=\{t_1,\ldots, t_p\}$. Let $\mathcal P_2=\langle S\cup T, R_1\cup R_2\rangle$ be a Tietze expansion of $\mathcal P_1$ (use Tietze moves to add each $t_i$ as a generator). Also assume that if two generators commute, then that commutation relation is in $R_1\cup R_2$. 
First we describe the space we work in and then we give the idea of the proof. Let $X^2$ be the Cayley 2-complex for $\mathcal P_2$. So the 1-skeleton of $X^2$ is the Cayley graph of $G$ with respect to $S\cup T$, each 2-cell has boundary labeled by an element of $R_1\cup R_2$, $\pi_1(X^2)=1$ and $G$ acts freely and co-compactly on $X^2$. We will add 3-cells to $X^2$.  Suppose $r\in R_1$ and $t\in T$ where each letter of $r$ commutes with $t$. If $C$ is a 2-cell of $X^2$ with boundary labeled by $r$ then we attach the obvious 3-cell with base labeled by $C$, top labeled by $C$ and sides labeled by conjugation relations involving $t$ and the letters of $r$. If three letters of $S\cup T$ commute, then add the corresponding 3-cells to each vertex of $X^2$. Call the resulting $3$-complex $X$. The group $G$ acts freely and co-compactly on the simply connected space $X$. Observe that if $C$ is a 2-cell of $X^2$ with $\partial C$ labeled by an element of $R_1$, and $x\in T$ commutes with each label in  $\partial C$ then there is a homeomorphism (onto its image) 
$$H_C:C\times (-\infty, \infty)\to X \hbox{ where }$$ 

\noindent 1) $H_C(c,0)=c$ for all $c\in C$, 

\noindent 2) for each integer $n$, $H_C(C\times [n,n+1])$ is one of our three cells and 

\noindent 3) $x^kH_C(c,n)=H_C(c,n+k)$ for all integers $n$ and $k$. 

In particular, if $n$ is an integer, then $H_C(C\times [n,n+1])$ is the translate of the 3-cell corresponding to $C$ and $x$,  by $x^n$.  Finally:

\medskip

\noindent 4) If $v$ is a vertex of $C$ then $H_C(\{v\}\times (-\infty,\infty)$ is  $v\cdot l_x$. 

\medskip

Now we outline the proof. Choose $W$ such that each element of $R_1$ has length $\leq W$. 
\begin{remark}\label{R1}
We assume that the constants $M_1(m)$, $M_2(m)$, $L_1(m)$, $L_2(m)$ and $T(v,m)$ are defined for our presentation $\mathcal P_2$. We also assume that $M_1(m)>m+W$ and $M_2(m)\geq M_1(m)+W$. 
\end{remark}
 
Any edge path loop is homotopic by a ``small" homotopy to an edge path loop with all labels in $S$. Hence it suffice to show that any edge path loop $\alpha$ in $X-St^{M_2(M_1(m)+W)}(\ast)$ with each edge label in $S$, is homotopically trivial in $X-St^m(\ast)$. It suffices to show that each sub-loop of $\alpha$ that does not intersect itself is homotopically trivial in $X-St^m(\ast)$. Hence we reduce to the case that $\alpha$ is embedded and all letters of $\alpha$ are in $S$.  Let $V$ be a van Kampen diagram for $\alpha$ where each 2-cell is labeled by an element of $R_1$, and let $Q:V\to X$ be the natural map taking the boundary of $V$ to $\alpha$. Since $\alpha$ is embedded, $V$ is homeomorphic to a 2-disk. It is convenient for our figures to label 2-cells, edges and vertices of $V$ by the label of their image under $Q$. We will use $V$ to build a 3-ball $B^3$ with $V$ embedded in $S^2$, the (2-sphere) boundary of $B^3$. We will also construct a map of $\bar Q:B^3\to X$ an extension of $Q$, such that $\bar Q$ restricted to $S^2$ minus the interior of $V$ will have image in $X-St^m(\ast)$ (so that $\alpha$ is homotopically trivial in $X-St^m(\ast)$ - finishing the proof of our theorem). In order to construct $B^3$ and $\bar Q$, we must have homotopies that move 2-cells of $V$ outside of $St^m(\ast)$. Commutation relations will make such homotopies  easy to construct, but if two 2-cells agree on an edge, our two homotopies may take this edge to different destinations. Three dimensional ``product" homotopies are then constructed to compensate for this issue. These homotopies resolve all issues for edges of $V$. Finally we must deal with vertices of $V$. Folding certain 2-cells together and more 3-dimensional  ``product" homotopies will be constructed to resolve those issues. 

\section{Proof of the Main Theorem}\label{PMT}
Before we begin the proof of the main theorem we settle the issue of the number of ends of $G$. As noted earlier, if some some $s\in S$ of our presentation $\langle S,R_1\rangle$ does not appear in any element of $R_1$, then $G$ splits as $\mathbb Z_s\ast \langle S-\{s\}\rangle$ and $G$ is infinite ended. Otherwise, we show $G$ is 1-ended.
\begin{theorem} [Theorem 3, \cite{M4}] \label{combE}  
Suppose $G$ is a finitely generated group, $A$ and $B$ are finitely generated 1 or 2-ended subgroups of $G$, $A\cup B$ generates $G$ and $A\cap B$ is infinite. Then $G$ is 1 or 2-ended.
\end{theorem}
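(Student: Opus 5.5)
The plan is a proof via the end-theoretic characterization of $G$: $G$ is 1 or 2-ended if and only if, for every compact $C$ in the Cayley graph $\Gamma$, the complement $\Gamma-C$ has at most two unbounded components. I would first treat the case where $A$ is a normal subgroup-free situation, i.e.\ just use the geometry of $A$, $B$ and $A\cap B$ inside $\Gamma$. Choose a finite generating set $S=S_A\cup S_B$ of $G$, where $S_A$ generates $A$ and $S_B$ generates $B$. Then every $g\in G$ is a product $a_1b_1\cdots a_kb_k$, so $\Gamma$ is connected via the cosets $gA$ and $gB$.

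The key steps are as follows. First, because $A$ is 1 or 2-ended, each coset $gA$ with its $S_A$-edges is a connected graph with at most two ends; the same holds for each coset $gB$. Second, $A\cap B$ is infinite, so for any $g$ the cosets $gA$ and $gB$ share the infinite set $g(A\cap B)$. Consequently, for a fixed compact $C$, each end of $gB$ meets the coset $gA$ arbitrarily far out in the same unbounded piece. More precisely, $g(A\cap B)$ is infinite and hence lies in at least one end of each coset. Third, I would argue that outside a large ball, the unbounded components of $\Gamma-C$ containing far points of $gA$ and of $gB$ coincide whenever $g$ is far from $C$. Indeed, for $g$ far away, the cosets $gA$ and $gB$ (or at least the large portions of them containing $g$) avoid $C$ entirely, by the compactness argument for finitely many translates. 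By induction along a word $a_1b_1\cdots a_kb_k$, one then shows that all unbounded components are reached from the unbounded components of the cosets $A$ and $B$ through $\ast$. Those give at most $2$ components from $A$, plus the ones from $B$, which are identified with $A$'s via $A\cap B$. Hence there are at most two unbounded components.

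The main obstacle is the third step: controlling the cosets $gA$ that pass through $C$, since those can be separated by $C$ into two infinite pieces. Removing $C$ may disconnect the part of a path near $C$, so the induction must route around $C$ using the other subgroup. My first attempt is to use local finiteness: only finitely many cosets of $A$ and of $B$ meet $C$, each contributing at most two infinite pieces. Each such infinite piece contains infinitely many points of some coset of $A\cap B$, hence meets infinitely many cosets of the other subgroup. All but finitely many of those cosets miss $C$, and I would connect them to far-away structure. A counting argument then bounds the total number of unbounded components by two, the bound coming from the two ends of a single coset. If the bookkeeping becomes unwieldy, I would fall back on the standard route via $H^1(G,\mathbb Z G)$. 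Writing $G$ as generated by $A\cup B$, an almost-invariant set restricted to each of $A$ and $B$ is determined up to finite sets. Because $A\cap B$ is infinite, these choices must agree, which yields at most one nontrivial independent almost-invariant class and therefore at most two ends.
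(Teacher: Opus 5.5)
The paper does not prove this statement; it quotes it as Theorem~3 of \cite{M4}. Your sketch therefore has to stand on its own, and it has a genuine gap. In the geometric route, the step that carries all the content is asserted rather than proved: why the pieces of $\Gamma-C$ produced by the finitely many cosets meeting $C$ fall into at most two unbounded components. Two of the claims you offer in its support are also false.

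\emph{First claim.} It is not true that $gA$ and $gB$ avoid $C$ when $g$ is far from $C$. Any $g\in A$ far from $\ast$ has $gA=A$, which meets every $C$ containing $\ast$. What is true is that only finitely many cosets of $A$ and of $B$ meet $C$. But those are exactly the cosets $C$ can cut, and your induction from the cosets through $\ast$ says nothing about them, since $C$ is arbitrary.

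\emph{Second claim.} An infinite piece of a coset $gA$ meeting $C$ need not meet infinitely many cosets of $B$. Suppose $[A:A\cap B]<\infty$, which is automatic when $A$ is $2$-ended since an infinite subgroup of a $2$-ended group has finite index. Then $gA$ is a finite union of sets $ga_i(A\cap B)$ and meets only the finitely many cosets $ga_iB$. For example, in $\langle a,b\mid a^2=b^2\rangle$ with $A=\langle a\rangle$, $B=\langle b\rangle$, each $gA$ meets exactly two cosets of $B$.

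So the ``counting argument'' that should cut the naive bound (twice the number of cosets meeting $C$) down to $2$ is missing, and that is the whole theorem.

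The fallback via almost-invariant sets is the right idea, but as written it does not work either. Restricting an almost-invariant set $X$ to $A$ and to $B$ constrains $X$ only on $A\cup B$. Knowing $X$ up to a finite set on each of infinitely many cosets does not determine $X$ up to a finite set on $G$. Three ingredients are needed.

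(i) The coboundary $\delta X$ (edges with exactly one endpoint in $X$) is finite, and each $S_A$-edge (resp.\ $S_B$-edge) lies in a single coset of $A$ (resp.\ $B$). Hence all but finitely many cosets $gA$, $gB$ lie entirely in $X$ or entirely in $G-X$.

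(ii) The bipartite coset graph (vertices the cosets $gA$ and $gB$, with $gA$ joined to $gB$) is connected because $A\cup B$ generates $G$. Adjacent cosets share the infinite set $g(A\cap B)$. So if $X$ is finite or cofinite on each of two adjacent cosets, it is finite on both or cofinite on both.

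(iii) Suppose some $X\cap gA$ is infinite and coinfinite. Then $A$ is $2$-ended, and each $ga(A\cap B)$ with $a\in A$ is coarsely dense in $gA$. So every neighbour $gaB$ is split in the same way, which forces $B$ to be $2$-ended. By connectivity every coset is split. Split cosets contain edges of $\delta X$, so there are only finitely many cosets; thus $[G:A]<\infty$ and $G$ is $2$-ended. Otherwise the finite/cofinite type is constant over the coset graph, so $X$ or $G-X$ lies in a finite union of finite sets, and $G$ is $1$-ended.

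With (i)--(iii) your fallback becomes a proof; without them neither route is one.
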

Suppose for each $s\in S$, there is $r_s\in R_1$ with $s$ a letter of $r_s$. Assign the element $t_r\in T$ to each  $r\in R_1$ so that $t_r$ commutes with each letter of $r$.
The abelian group $\langle s,t_{r_s}\rangle$ is 1 or 2-ended and $\langle T\rangle$ is 1-ended. Theorem \ref{combE} implies $\langle T\cup \{s\}\rangle$ is 1-ended. 
Inductively $G$ is 1-ended. 

Now we prove Theorem \ref{Main}. As in \S\ref{OutL}:

\medskip 

\noindent {\it We assume $\alpha$ is an embedded edge path loop with image in $X-St^{M_2(M_1(m)+W)}(\ast)$ with each edge label in $S$ and $V$ is a van Kampen diagram for $\alpha$ with each 2-cell bounded by an edge path labeled by an element of $R_1$. }

\medskip

Also $Q:V\to X$ is the obvious map agreeing with $\alpha$ on $\partial V$.  Our goal is to prove that $\alpha$ is homotopically trivial in $X-St^m(\ast)$.

\medskip

\noindent $(\dagger)$ {\it Say $C$ is a 2-cell of $V$ and $x\in T$ has been assigned to the relation corresponding to $C$.
If $\partial C\cap \partial V\ne\emptyset$.  Choose $v(=v_C)\in \partial C\cap \partial V$.
If $(Q(v)\cdot r_x)\cap St^{M_1(m)+W}(\ast)\ne\emptyset$ then assign $x^{-1}$ to $C$.  Otherwise assign $x$ to $C$.
If $\partial C\cap \partial V=\emptyset$ then assign $x$ to $C$.} 

\medskip

If $C$ is a 2-cell of $V$ containing vertices $v$ and $w$, and $x$ is assigned to $C$, then (see Remark \ref{R1}) $(Q(v)\cdot r_{x})\subset St^W(Q(w)\cdot r_{x})$ (the rays $Q(v)\cdot r_{x}$ and $Q(w)\cdot r_{x}$ are parallel since $x$ commutes with all letters of $C$). 

Now we begin to build $B^3$ by adding 3-cells to $V$, one for each 2-cell of $V$ and we extend $Q$ to these cells. If $C$ is a 2-cell of $V$ and $x\in T$ is assigned to $C$. Consider $Q(C)$ in $X$. Consider  the ``product" homeomorphism (onto its image) $H_C:C\times [0,\infty)\to X$ such that $H_C(c,i)=x^i\cdot Q(c)$ for all $c\in C$ and all integers $i\geq 0$. If $v$ is a vertex of $C$ in $V$ 
then $H_C|_{\{v\}\times [0,\infty)}=Q(v)\cdot r_x(=v\cdot r_x)$. 

\medskip

\noindent ($\dagger\dagger$) {\it If $C$ is a 2-cell of $V$ and $x$ is assigned to $C$ and $v$ is any vertex of $C$, then $H_C(C\times [0,\infty))\subset St^{W}(Q(v)\cdot r_x)$.} 

\medskip

For $C$ a 2-cell of $V$, choose an integer $N_C$ such that $H_C|_{C\times [N_C,\infty)}$ avoids $St^m(\ast)$. Let $N_1$ be the largest of the $N_C$ for $C$ a 2-cell of $V$. Choose $N_2$ greater than $T(v,m)$ (see Lemma \ref{L5}) for all vertices $v$ of $Q(V)$. Choose $N=max\{N_1, N_2\}$. For each 2-cell $C$ of $V$ attach the product 3-cell $C\times [0,N]$ to $V$ by identifying $c\in C$ with $(c,0)\in C\times [0,N]$. If $e$ (respectively $v$) is a common edge (vertex) for cells $C_1$ and $C_2$ of $V$, and the elements of $T^{\pm 1}$ assigned to $C_1$ and $C_2$ are the same, then identify the copy of $e\times [0,N]$ (respectively $\{v\}\times [0,N]$) in $C_1\times [0,N]$  in our attachment space with the copy of $e\times [0,N]$ (respectively $\{v\}\times [0,N]$) in $C_2\times [0,N]$ in the obvious way. Call the resulting space $B_2$. The space $B_2$ will be a subspace of $B^3$. We think of $C\times \{0,N\}$ as being ``shaded". Both of these 2-disks will be part of the boundary of $B^3$. Also if $e$ is an edge in $\partial V$, then $e\times [0,N]$ will be part of the boundary of $B^3$.  If no vertex of $C$ belongs to $\partial V$, then all points of an attached $(C\times (0,N))-(C\times \{0,N\})$ will be interior points of $B^3$.  Extend $Q$ to $B_2$ by $Q(p)=H_{C_i}(p)$ if $p\in C_i\times [0,N]$. This map is well defined and this completes the part of our construction of $B^3$ connected to 2-cells of $V$. Observe that if $C$ is a 2-cell of $V$ then for $p\in C-\partial C$, each point of $\{p\}\times [0,N]$ in $B_2$ satisfies the conditions for a 3-manifold with boundary in $B_2$. Some edges and vertices of $V$ require more attention.  

\begin{lemma}\label{L6}
If $C$ is a 2-cell of $V$ such that  $\partial C\cap \partial V\ne \emptyset$ then $H_C(=H|_{C\times [0,N]})$ has image in $X-St^m(\ast)$. 
\end{lemma}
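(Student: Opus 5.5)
We need to show: if $C$ meets $\partial V$ at the chosen vertex $v=v_C$, then $H_C(C\times[0,N])$ avoids $St^m(\ast)$. By ($\dagger\dagger$), $H_C(C\times[0,\infty))\subset St^W(Q(v)\cdot r_{x'})$, where $x'\in\{x,x^{-1}\}$ is the element assigned to $C$ in ($\dagger$). So it suffices to prove that the ray $Q(v)\cdot r_{x'}$ misses $St^{m+W}(\ast)$. Indeed, if a point of $H_C(C\times[0,\infty))$ lay in $St^m(\ast)$, it would lie within $W$ of a vertex of the ray, and that vertex would then lie in $St^{m+W}(\ast)$. Since $M_1(m)>m+W$ by Remark \ref{R1}, it is enough to show the ray misses the larger star $St^{M_1(m)+W}(\ast)$.

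I would then split into the two cases of ($\dagger$).

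In the first case, $x'=x$. This happens exactly when $(Q(v)\cdot r_x)\cap St^{M_1(m)+W}(\ast)=\emptyset$, and the required conclusion is then immediate.

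In the second case, $x'=x^{-1}$, because $(Q(v)\cdot r_x)\cap St^{M_1(m)+W}(\ast)\neq\emptyset$. Here $Q(v)$ lies on $\alpha$, so it is outside $St^{M_2(M_1(m)+W)}(\ast)$. Since $T$ has rank $\ge 3$, I pick $y,z\in T-\{x\}$ so that $\{x,y,z\}$ generates a copy of $\mathbb Z^3$; these are generators of $\mathcal P_2$. Then I apply Lemma \ref{L3} with $m$ replaced by $M_1(m)+W$, basepoint $Q(v)$ and the ray $Q(v)\cdot r_x$. The lemma gives $(Q(v)\cdot R(x^-,y,z))\cap St^{M_1(m)+W}(\ast)=\emptyset$. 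The ray $Q(v)\cdot r_{x^{-1}}$ has vertices $Q(v)x^{-i}$ with $i\ge 0$, so it lies in $Q(v)\cdot R(x^-,y,z)$ and hence misses $St^{M_1(m)+W}(\ast)$. Alternatively, Lemma \ref{L2} (its final clause, with $y$ replaced by $x$ and $x$ by some other element of $T$) gives the same conclusion, since $M_2\ge M_1$.

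Combining the two cases, the whole of $H_C(C\times[0,\infty))$, in particular $H_C(C\times[0,N])$, lies in $X-St^m(\ast)$.

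The main obstacle is bookkeeping rather than a new idea, and I would check three points. First, ($\dagger\dagger$) must apply to the cell itself and not only to its vertices. Cells of $X$ attached along edges lie in the full subcomplex spanned by vertices within $W$, and $St^m(\ast)$ is full, so a cell meets $St^m(\ast)$ only if some vertex does. Second, the constants must be passed correctly: the lemmas are stated with a parameter, and here that parameter is $M_1(m)+W$. Third, the reduction from the star $St^{M_1(m)+W}$ to $St^{m+W}$ uses Remark \ref{R1}.
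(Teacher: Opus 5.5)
Your proof is correct and is essentially the paper's argument. You use $(\dagger\dagger)$ to reduce to showing that the assigned ray $Q(v)\cdot r_{x'}$ misses $St^{m+W}(\ast)$; in the branch of $(\dagger)$ where the direction was flipped, you apply the opposite-ray lemma at $Q(v)\in\alpha$ (the paper uses Lemma \ref{L2}, and your Lemma \ref{L3} variant works equally well). The only difference is that you split directly on the two branches of $(\dagger)$ rather than on whether the line $Q(v)\cdot l_x$ meets $St^{m+W}(\ast)$, which keeps the parameter $M_1(m)+W$ consistent throughout and avoids the paper's slightly loose use of $St^{M_1(m)}(\ast)$ where $(\dagger)$ only gives $St^{M_1(m)+W}(\ast)$.
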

\begin{proof}
Let $v=v_C\in \partial C\cap \partial V$ and say $x\in T^{\pm1}$ is assigned to $C$.
Suppose 
$$(Q(v)\cdot l_{x})\cap St^{m+W}(\ast)= \emptyset.$$
Statement ($\dagger\dagger$) implies that  $Im(H_C)\subset St^{W}(Q(v)\cdot l_{x})$ (which does not intersect $St^m(\ast)$).
Next suppose 
$$(Q(v)\cdot l_{x})\cap St^{m+W}(\ast)\ne \emptyset .$$ 
Since $M_1(m)\geq m+W$ we have $(Q(v)\cdot l_{x})\cap St^{M_1(m))}(\ast)\ne \emptyset$.
Since $x$ is assigned to $C$, 
 $(Q(v)\cdot r_{x^{-1}})\cap St^{M_1(m)}(\ast)\ne \emptyset$ (see $(\dagger)$). Choose $y\in T-\{x, x^{-1}\}$.
Since $Q(v)\in X-St^{M_1(M_1(m))}(\ast)$, Lemma \ref{L2} (with $x^+$ playing the role of $y^-$, $y$ playing the role of $x$ and $M_1(m)$ playing the role of $m$) implies that 
$$(Q(v)\cdot r_x)\cap St^{M_1(m)}(\ast)=\emptyset.$$ 
Since $St^{m+W}(\ast) \subset St^{M_1(m)}(\ast)$, we have $(Q(v)\cdot r_{x})\cap St^{m+W}(\ast)=\emptyset$. Since $Im(H_C)\subset St^{W}(Q(v)\cdot r_{x})$ we are finished.
\end{proof}

Suppose $e$ is an edge of distinct cells $C_1$ and $C_2$ of $V$. If the same element of $T^{\pm 1}$ has been assigned to $C_1$ and $C_2$, then for $p$ in the interior of $e$, each point of $\{p\}\times [0,N]$ is a 3-manifold point of $B_2$. Consider the edge $e$ to be a map $e:[0,1]\to V$. Suppose $C_1$ is assigned $x$ and $C_2$ is assigned $y$ for $x\ne y^{\pm 1}$. We attach the integer lattice cube complex $K_e= [0,N]\times [0,N]\times [0,1]$ to $B_2$ along $([0,N]\times \{0\}\times [0,1])\cup  (\{0\}\times [0,N]\times [0,1])$ (the two unshaded rectangles containing $e$ in Figure \ref{Fig2}). (Note that our coordinates in Figure \ref{Fig2} do not follow the ``right hand rule".) Our attaching map identifies $(n,0,t)\in [0,N]\times \{0\}\times [0,1]$ with $(e(t),n)\in C_1\times [0,N]$ for all $(t,n)\in [0,1]\times [0,N]$ and $(0,n,t)\in \{0\}\times [0,N]\times [0,1]$ with $(e(t),n)\in C_2\times [0,N]$ for all $(t,n)\in [0,1]\times [0,N]$. 
Next we extend $Q$ (from $B_2\to X$) to $K_e=[0,N]\times[0,N]\times [0,1]$ (as attached to $B_2$). Let the initial vertex of $e$ be $v$ and the terminal vertex be $w$. If $(a,b,c)$ is a vertex of $[0,N]\times[0,N]\times [0,1]$ then define $Q(a,b,0)=Q(v)x^ay^b$ (so if one considers the edge path beginning at $Q(v)$, having the first $a$ edges labeled $x$ and the next $b$ edges labeled $y$, then the end point of this path is $Q(a,b,0)$. Define $Q(a,b,1)=Q(w)x^ay^b$. Extend $Q$ to the remaining parts of  $K_e$ in the obvious way. Our next lemma shows that $Q|_{K_v}$ on the shaded region of Figure \ref{Fig2}, avoids $St^m(\ast)$. 

\begin{figure}
\vbox to 3in{\vspace {-1in} \hspace {-.1in}
\hspace{-1 in}
\includegraphics[scale=1]{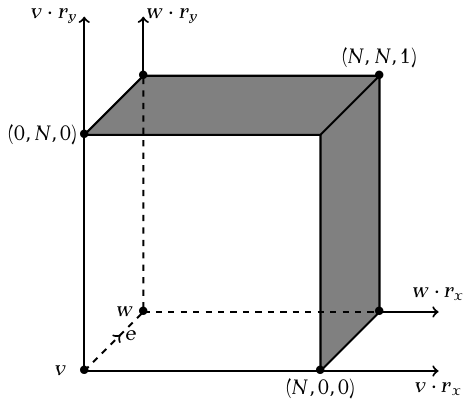}
\vss }
\vspace{.2in}
\caption{Attaching and Mapping $Q|_{K_e}$} 
\label{Fig2}
\vspace{-.1in}
\end{figure}

\begin{lemma}\label{L7}
Suppose $x$ and $y$ are assigned respectively, to 2-cells $C_1$ and $C_2$ of $V$ and $x\ne y^{\pm1}$. If $e=[v,w]$ is a common edge of $C_1$ and $C_2$,  then for $i\in \{0,1\}$: 
$$Q|_{K_e}(([0,N]\times \{N\}\times \{i\})\cup(\{N\}\times [0,N]\times \{i\}))\subset X-St^m(\ast).$$
\end{lemma}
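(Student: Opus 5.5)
The plan is to locate the images of the relevant vertices of $K_e$ inside translated half planes, and then apply Lemma \ref{L5}; the edges are handled by the fullness of $St^m(\ast)$.

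\textbf{Vertices.} Fix $i\in\{0,1\}$ and let $u=v$ if $i=0$ and $u=w$ if $i=1$. By definition, $Q(a,b,i)=Q(u)x^ay^b$ for every lattice vertex. Since $x,y\in T^{\pm1}$ with $x\ne y^{\pm1}$, the pair $\{x,y\}$ generates a subgroup isomorphic to $\mathbb Z^2$. Here $x$ and $y$ are letters of $(S\cup T)^{\pm1}$, and by Remark \ref{R1} all constants are computed for $\mathcal P_2$, so Lemma \ref{L5} applies to this pair.
\begin{itemize}
\item On the face $\{N\}\times[0,N]\times\{i\}$ the vertices are $Q(u)x^Ny^b$ with $0\le b\le N$. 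Each lies in $Q(u)\cdot r_x(N)\cdot P(x^+,y)$, since $y^b=x^0y^b$ is a vertex of $P(x^+,y)$.
\item On the face $[0,N]\times\{N\}\times\{i\}$ the vertices are $Q(u)x^ay^N$ with $0\le a\le N$. Because $x$ and $y$ commute, $Q(u)x^ay^N=Q(u)y^Nx^a$, which lies in $Q(u)\cdot r_y(N)\cdot P(y^+,x)$.
\end{itemize}
The vertex $u$ is a vertex of $V$, so $Q(u)$ is a vertex of $Q(V)$. By the choice of $N$ we have $N\ge N_2>T(Q(u),m)$. Lemma \ref{L5}, applied once to the pair $(x,y)$ and once to the pair $(y,x)$ (the lemma is stated for all ordered pairs in $S^{\pm1}$ generating $\mathbb Z^2$, so inverse letters are allowed), shows that both half planes miss $St^m(\ast)$. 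Hence every vertex of the two segments maps outside $St^m(\ast)$.

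\textbf{Edges.} The map $Q$ sends each lattice edge of these segments to an edge of $X$ labeled $x$ or $y$, joining two of the vertices just considered. Since $St^m(\ast)$ is a full subcomplex, a cell lies in it only when all of its vertices do. So an edge whose endpoints both lie outside $St^m(\ast)$ has image in $X-St^m(\ast)$. Applying this for $i=0$ and $i=1$ gives the lemma.

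\textbf{Main point of care.} The only delicate step is ensuring that $N$ exceeds $T(Q(u),m)$ for both endpoints $u=v$ and $u=w$. This holds because $N_2$ was chosen larger than $T(p,m)$ for every vertex $p$ of $Q(V)$. One must also use the commutation of $x$ and $y$ to rewrite $x^ay^N$ as $y^Nx^a$, so that the second face lands in a translate of the half plane $P(y^+,x)$ to which Lemma \ref{L5} applies.
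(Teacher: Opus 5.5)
Your proposal is correct and matches the paper's proof. The paper likewise notes that the four segments map into the lines $Q(v)\cdot r_y(N)\cdot l_x$, $Q(w)\cdot r_y(N)\cdot l_x$, $Q(v)\cdot r_x(N)\cdot l_y$ and $Q(w)\cdot r_x(N)\cdot l_y$, which lie inside the half planes you name, and then applies Lemma \ref{L5} using $N\geq N_2\geq \max\{T(v,m),T(w,m)\}$; your use of $x^ay^N=y^Nx^a$ and of the fullness of $St^m(\ast)$ for edges only makes explicit what the paper leaves implicit.
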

\begin{proof}
Observe that  
$$Im[Q|_{K_v}([0,N]\times \{N\}\times \{0\}]\subset Q(v)\cdot r_y(N)\cdot l_x$$ 
$$Im[Q|_{K_e}([0,N]\times \{N\}\times \{1\}] \subset Q(w)\cdot r_y(N)\cdot l_x$$ 
 $$Im[Q|_{K_e}(\{N\}\times [0,N]\times \{0\}] \subset Q(v)\cdot r_x(N)\cdot l_y \hbox{ and} $$
 $$Im[Q|_{K_e}(\{N\}\times [0,N]\times \{1\}]\subset Q(w)\cdot r_x(N)\cdot l_y.$$ 
 These line segments bound the two shaded rectangles of Figure \ref{Fig2}. Since $N\geq N_2\geq max\{T(v,m), T(w,m)\}$, Lemma \ref{L5} implies that none of these sets intersect $St^m(\ast)$. 
\end{proof}
In the next lemma we retain the hypotheses of Lemma \ref{L7}.
\begin{lemma}\label{L8}
Suppose $x$ and $y$ are assigned respectively, to 2-cells $C_1$ and $C_2$ of $V$,  $x\ne y^{\pm1}$ and  $e=[v,w]$ is a common edge of $C_1$ and $C_2$. Suppose further that $z\in T-\{x^{\pm 1},y^{\pm 1}\}$, $v\in \partial V$ and one of the  three lines $Q(v)\cdot l_x, Q(v)\cdot l_y, Q(v)\cdot l_z$ intersects $St^{M_1(m)}(\ast)$ non-trivially then
$$Im[Q|_{K_e}([0,N]\times [0,N]\times\{0\})]\subset X-St^{M_1(m)}(\ast)(\subset X-St^m(\ast)).$$ 
(This means that the function $Q|_{K_e}$ on the front face of the cube of Figure \ref{Fig2} avoids $St^m(\ast)$. This face will be part of $S^2$, the boundary of $B^3$). 
\end{lemma}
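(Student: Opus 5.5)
The plan is to show that every vertex $Q(v)x^ay^b$ with $0\le a,b\le N$ (the front face of $K_e$) lies in a half space based at $Q(v)$ that Lemma \ref{L3} forces to miss $St^{M_1(m)}(\ast)$. Edges and squares of the face then also miss it, since $St$ is a full subcomplex. The front face lies in the plane $Q(v)\langle x,y\rangle$ and, more precisely, in the quadrant $\{Q(v)x^ay^b : a,b\ge 0\}$. So it is contained in each of
$$Q(v)\cdot R(z^{\pm},x,y),\qquad Q(v)\cdot R(x^+,y,z),\qquad Q(v)\cdot R(y^+,x,z).$$
Lemma \ref{L3} applies at $Q(v)$ with $M_1(m)$ in the role of $m$. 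Indeed, $v\in\partial V$ gives $Q(v)\notin St^{M_2(M_1(m)+W)}(\ast)$, and by Remark \ref{R1} $M_2$ is monotone enough that $Q(v)\notin St^{M_2(M_1(m))}(\ast)$. Also $\{x,y,z\}$ generates a copy of $\mathbb Z^3$ in $\langle T\rangle$, and the lemma is stated for $S^{\pm1}$, so signs may be permuted freely.

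I will split into cases according to which line meets $St^{M_1(m)}(\ast)$. If a line $Q(v)\cdot l_u$ meets it, then one of the rays $Q(v)\cdot r_{u}$ or $Q(v)\cdot r_{u^{-1}}$ does, since the line is the union of these two rays.

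\emph{Case $u=z$.} Suppose $Q(v)\cdot r_{z^\epsilon}$ meets $St^{M_1(m)}(\ast)$. Lemma \ref{L3} with $z^\epsilon$ in the role of $x$ gives $Q(v)\cdot R(z^{-\epsilon},x,y)\cap St^{M_1(m)}(\ast)=\emptyset$. This half space contains the whole plane $Q(v)\langle x,y\rangle$, so it contains the front face.

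\emph{Case $u=x$ (the case $u=y$ is symmetric, using $C_2$).} First I rule out the ray $Q(v)\cdot r_x$ meeting $St^{M_1(m)}(\ast)$; this uses the assignment rule $(\dagger)$. Since $v\in\partial C_1\cap\partial V$, the cell $C_1$ meets $\partial V$, so $C_1$ has a chosen vertex $v_{C_1}\in\partial V$. The element assigned to $C_1$ is $x$, so by $(\dagger)$ we have $(Q(v_{C_1})\cdot r_x)\cap St^{M_1(m)+W}(\ast)=\emptyset$. The sign ambiguity is harmless: whichever of $t^{\pm1}$ was chosen is named $x$ here. The rays from $Q(v)$ and $Q(v_{C_1})$ in direction $x$ are parallel and within $W$ of each other, as noted after $(\dagger)$. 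Hence $Q(v)\cdot r_x\subset St^W(Q(v_{C_1})\cdot r_x)$ misses $St^{M_1(m)}(\ast)$. Therefore it is $Q(v)\cdot r_{x^{-1}}$ that meets $St^{M_1(m)}(\ast)$. Lemma \ref{L3}, with $x^{-1}$ in the role of $x$, then gives $Q(v)\cdot R(x^{+},y,z)\cap St^{M_1(m)}(\ast)=\emptyset$, and the front face lies in this half space.

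The main obstacle is exactly the step in the $u=x$ (and $u=y$) case. Lemma \ref{L3} only clears the half space \emph{opposite} the ray that meets the star, so the argument fails unless we know which ray meets. The choice in $(\dagger)$, transferred from $v_{C_1}$ to $v$ via $(\dagger\dagger)$-style parallelism, is what pins this down. I expect the delicate points to be the bookkeeping of the constants: that $M_1(m)+W$ in $(\dagger)$ absorbs the $W$ shift, and that $M_2(M_1(m)+W)\ge M_2(M_1(m))$.
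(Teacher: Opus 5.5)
Your proof is correct. The $z$ case is the paper's argument, but the $x$ (and $y$) case runs differently.

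The paper transfers \emph{positive} information. With $u_1=v_{C_1}$:
\begin{itemize}
\item the hypothesis gives that $Q(u_1)\cdot l_x$ meets $St^{M_1(m)+W}(\ast)$;
\item $(\dagger)$ then gives that $Q(u_1)\cdot r_{x^{-1}}$ meets it;
\item parallelism moves this to $Q(v)\cdot r_{x^{-1}}$ meeting $St^{M_1(m)+2W}(\ast)$;
\item the planar Lemma \ref{L2} then clears the half plane $Q(v)\cdot P(x^+,y)$.
\end{itemize}
You transfer \emph{negative} information instead: $Q(v_{C_1})\cdot r_x$ misses $St^{M_1(m)+W}(\ast)$, so $Q(v)\cdot r_x$ misses $St^{M_1(m)}(\ast)$. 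The $W$-shift therefore costs nothing. You then use the hypothesis at $v$ directly and apply Lemma \ref{L3} at level $M_1(m)$ uniformly in all three cases.

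The gain is in the constants. You only need $Q(v)\notin St^{M_2(M_1(m))}(\ast)$, the same threshold as in the $z$ case. The paper works at level $M_1(m)+2W$, so it needs $Q(v)\notin St^{M_1(M_1(m)+2W)}(\ast)$. Getting that from $Q(v)\notin St^{M_2(M_1(m)+W)}(\ast)$ requires a comparison of constants that Remark \ref{R1} does not spell out.

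The price is that $(\dagger)$ is no longer used verbatim. Suppose the element $t\in T$ of the relation had $Q(v_{C_1})\cdot r_t$ meeting $St^{M_1(m)+W}(\ast)$, so that $x=t^{-1}$ was assigned. Then $(\dagger)$ alone does not say that $Q(v_{C_1})\cdot r_{x}$ misses $St^{M_1(m)+W}(\ast)$. You need one application of Lemma \ref{L2} at $Q(v_{C_1})$. This is legitimate, because $Q(v_{C_1})\in\alpha$ lies outside $St^{M_2(M_1(m)+W)}(\ast)\supseteq St^{M_1(M_1(m)+W)}(\ast)$. So your remark that ``the sign ambiguity is harmless'' is true, but it needs that line.

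Likewise, monotonicity of $M_2$ is not part of Remark \ref{R1}. It holds once $L_2$ is chosen nondecreasing in $m$, and the paper tacitly uses it too in its own $z$ case.
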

\begin{proof} Let $u_i=v_{C_i}$ for $i\in\{1,2\}$. Suppose that $(Q(v)\cdot l_x)\cap St^{M_1(m)}(\ast)\ne\emptyset$. 
Then $(Q(u_1)\cdot l_x)\cap St^{M_1(m)+W}(\ast)\ne \emptyset$. Since $x$ was assigned to $C_1$, ($\dagger$) implies $(Q(u_1)\cdot r_{x^{-1}})\cap St^{M_1(m)+W}(\ast)\ne \emptyset$. Since $Q(v)\cdot r_{x^{-1}}$ and $Q(u_1)\cdot r_{x^{-1}}$ are parallel: 
$$(Q(v)\cdot r_{x^{-1}})\cap St^{M_1(m)+2W}(\ast)\ne \emptyset.$$ 
Since $v\in \partial V$, 
$$Q(v)\in \alpha\subset X-St^{M_2(M_1(m)+W)}(\ast)\subset X-St^{M_1(M_1(m)+2W)}(\ast).$$ 
Lemma \ref{L2} (with $(M_1(m)+2W$ playing the role of $m$, $x$ playing the role of $y^{-1}$ and $y$ playing the role of $x$) implies that 
$$Q(v)\cdot P(x^+,y)\cap St^{M_1(m)+2W}(\ast)=\emptyset.$$ 
Since  $Im[Q|_{K_e}([0,N]\times [0,N]\times\{0\})]\subset Q(v)\cdot P(x^+,y)$ (see Figure \ref{Fig2}) the case when $(Q(v)\cdot l_x)\cap St^{M_1(m)}(\ast)\ne\emptyset$ is complete. 

If $(Q(v)\cdot l_y)\cap St^{M_1}(m)(\ast)\ne \emptyset$ the argument is the same (interchange  $C_1$, $u_1$, $x$ and $y$ by $C_2$, $u_2$, $y$ and $x$ respectively). 

Next assume $(Q(v)\cdot r_z)\cap St^{M_1(m)}(\ast)\ne \emptyset$. Recall, $v\in X-St^{M_2(M_1(m))}(\ast)$. Lemma \ref{L3} implies that $(Q(v)\cdot R(z^-,x,y)\cap St^{M_1(m)}(\ast)=\emptyset$. Since the vertex set of $Im[Q|_{K_e}([0,N]\times [0,N]\times\{0\})]$ is a subset of $Q(v)\cdot \langle x,y\rangle\subset 
(Q(v)\cdot R(z^-,x,y)$, this case is complete. 
\end{proof}

Once again we assume $x$ and $y$ are assigned respectively, to 2-cells $C_1$ and $C_2$ of $V$,  $x\ne y^{\pm1}$ and  $e=[v,w]$ is a common edge of $C_1$ and $C_2$ (as in Lemma \ref{L7}) and as in Lemma \ref{L8}, assuming that $z\in T-\{x^{\pm1},y^{\pm 1}\}$ and $v\in \partial V$. Suppose that $(Q(v)\cdot l_t)\cap St^{M_1(m)}(\ast)=\emptyset$ for all $t\in T$. If $Q|_{K_e}([0,N]\times [0,N]\times \{0\})\cap St^m(\ast)=\emptyset$ then this face of $K_e$ will be a ``satisfactory" part of the boundary of $B^3$ (as in the conclusion of Lemma \ref{L8}). Otherwise, we make an adjustment.  Assume $p\in Q|_{K_e}([0,N]\times [0,N]\times \{0\})\cap St^m(\ast)$. Let $D(=[0,N]\times [0,N]$) be the domain of $Q|_{K_e}:([0,N]\times [0,N]\times\{0\})\to X$ (in $B_2$).  So $D$ is the shaded region of Figure \ref{Fig3}. 
We will construct a cube $A$ and attach $A$ to $B_2$ along  the 2-disk $D$. We extend the map $Q$ to the cube $A$ so that the boundary of $A$ minus the domain  $D$ has image in $X-St^m(\ast)$. In this way, we replace $D$ by another disk that is mapped into $X-St^m(\ast)$. This will nearly complete our construction of $B^3$ connected to edges of $V$. 

\begin{figure}
\vbox to 3in{\vspace {-1.5in} \hspace {-.1in}
\hspace{-1 in}
\includegraphics[scale=1]{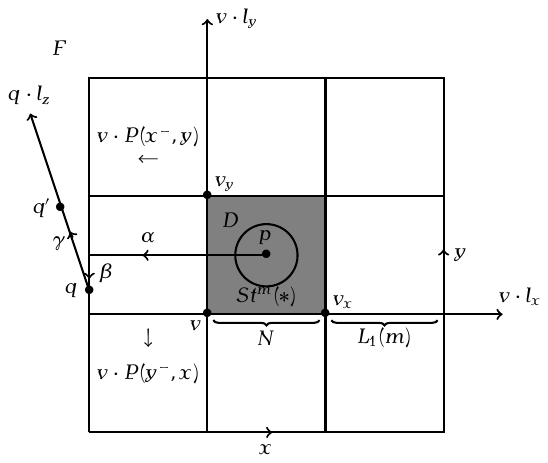}
\vss }
\vspace{.2in}
\caption{Adjusting Faces of 3-Cells} 
\label{Fig3}
\vspace{-.1in}
\end{figure}

Figure \ref{Fig3} depicts the face $F$ of $A$ containing $D$. Here $v_x=Q(v)\cdot x^N(=Q(v)\cdot r_x(N))$ and $v_y=Q(v)\cdot y^N(=Q(v)\cdot r_y(N))$. Since $St^m(\ast)\cap Q(D)\ne\emptyset$,  $Q(v)P(x^+,y)\cap St^m(\ast)\ne \emptyset \ne Q(v)P(y^+,x)\cap St^m(\ast)$ (see Figure \ref{Fig3}). 
Combining with the fact that  $Q(v)\cdot l_x\cap St^{M_1(m)}(\ast)=\emptyset=Q_v\cdot l_y\cap St^{M_1(m)}(\ast)$, Lemma \ref{L1} implies that both the sets $Q(v)\cdot P(x^-, y)$, and $Q(v)\cdot P(y^-, x)$, have trivial intersection with $St^m(\ast)$.

Since $N\geq T(v,m)$, Lemma \ref{L5} implies that $v_y\cdot P(y^+,x)$ and $v_x\cdot P(x^+,y)$ have trivial intersection with $St^m(\ast)$.
Hence the $(2L_1(m)+N)$ by  $(2L_1(m)+N)$ rectangle of Figure \ref{Fig3} is such that the part outside of $D$ (unshaded) avoids $St^m(v)$. If $q$ is a point on the boundary of $F$ (see Figure \ref{Fig3}) then an $(x,y,z)$ geodesic from $p\in St^m(\ast)$ to a point $q'$ of $q\cdot l_z$ has the form $(\alpha, \beta, \gamma)$ where each edge of $\alpha$ has label $x$ (or $x^{-1}$), each each label of $\beta$ has label $y$ (or $y^{-1}$) and each label of $\gamma$ has label $z$ (or $z^{-1}$). Either $\alpha$ or $\beta$ has length $\geq L_1(m)$. Hence the definition of $L_1(m)$ implies that  $(q\cdot l_z)\cap St^m(\ast)=\emptyset$ for all boundary points $q$ of $F$.
To finish our construction of $A$, simply choose an integer $J$, so that the $z^J$ translate of $Q(F)$ avoids $St^m(\ast)$.  Map the integer lattice cube $F\times [0,J]$ to $X$ in the obvious way. Note that  $\partial (F\times [0,J])-D$ maps to $X-St(\ast)$ (this set will the part of $S^3=\partial B^3$ replacing $D$).

One of the hypotheses of Lemma \ref{L7} is that $x\ne y^{\pm 1}$. If $x=x^{\pm 1}$ then choose $z\in  T-\{x^{\pm 1}\}$ such that $z$ commutes with the label of $e$ (use part (2) of the hypotheses for Theorem \ref{Main}). Form two cubes of the form $[0,N]\times [0,N]\times [0,1]$. Attach as in Figure \ref{Fig4} and map into $X$ as in the previous cases (attaching an additional disk on a front face if necessary).

One more issue must be addressed before we move from considering edges to considering vertices of $V$. It may be that cells $C_1$ and $C_2$ of $V$ share  consecutive edges. If $C_1$ and $C_2$ share $k$-consecutive edges, then instead of attaching a copy of $[0,N]\times [0,N]\times [0,1]$ attach a copy of $[0,N]\times [0,N]\times [0,k]$ in the obvious way.

\begin{figure}
\vbox to 3in{\vspace {-2in} \hspace {-.9in}
\hspace{-1 in}
\includegraphics[scale=1]{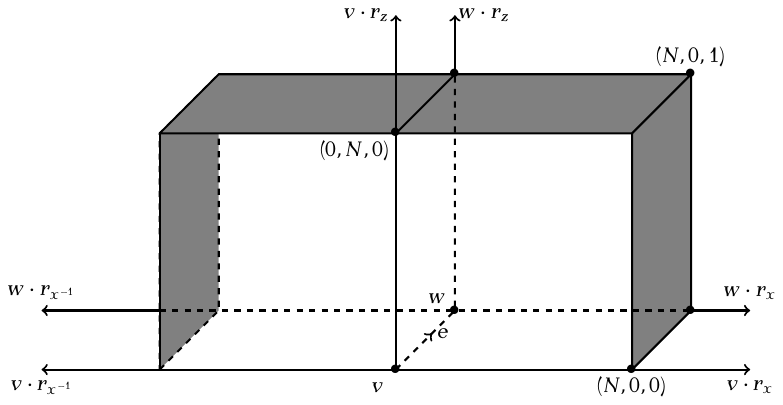}
\vss }
\vspace{-.8in}
\caption{Double Attaching and Mapping} 
\label{Fig4}
\vspace{-.1in}
\end{figure}

Let $B_1$ be $B_2$ union all of the 3-cells corresponding to the $K_e$ along with the 3-cells attached to faces $D$ of this last type. Note that each 3-cell that we have attached so far strong deformation retracts to its attaching set and so $B_1$ is contractible. When retracting a 3-cell of the type $[0,N]\times [0,N]\times [0,1]$ (see Figure \ref{Fig2}) to its attaching set, it is important to define a single deformation retract defined on $[0,N]\times [0,N]\times [0,1]$ and use this deformation retract to define all deformation retracts in $B_1$ of 3-cells of this type. The deformation we choose is a composition of two strong deformation retractions. The first is a strong deformation retraction of $[0,N]\times [0,N]\times [0,1]$ to the four unshaded faces of Figure \ref{Fig2} ($([0,N]\times[0,N]\times \{0,1\})\cup ([0,N]\times \{0\}\times [0,1])\cup (\{0\}\times [0,N]\times[0,1]$). The second strong deformation retraction deforms the front and back face ($([0,N]\times[0,N]\times \{0,1\})$) of Figure \ref{Fig2}, to the line segments  $([0,N]\times \{0\}\times \{0\})\cup (\{0\}\times [0,N]\times \{0\})$ (respectively $([0,N]\times \{0\}\times \{1\})\cup (\{0\}\times [0,N]\times \{1\})$). 
In the next step we may fold the free face $[0,N]\times [0,N]\times \{0\}$) of a 3-cell to another free face of an adjacent 3-cell. In order that these two 3-cells strong deformation retract to $B_2$ we will apply the first type of deformation to these two copies of $[0,N]\times [0,N]\times[0,1]$ and then later apply the second type along the now common front face and the back faces.   
The shaded regions of Figure \ref{Fig2} (or Figure \ref{Fig4}) will be part of the boundary of $B^3$. Each point of $[0,N]\times [0,N]\times (0,1)$ in $B_1$ is a 3-manifold point of $B_1$. At this point we are finished attaching 3-cells corresponding to 1-cells of $V$. 

Finally we must attach integer lattice cube complexes of the form $[0,N]\times [0,N]\times [0,N]$ 
for certain vertices $v$ of $V$. This will be done so that $(0,0,0)$ will be attached to $v$. If $v$ is a boundary vertex of $V$ then there is nothing to do. If $v$ is an interior vertex of $V$ then let $C_1,\ldots ,C_n$ be the 2-cells of $V$ containing $v$ such that $C_i$ and $C_{i+1}$ share the edge $e_i=[v,w_i]$ and $C_1$ and $C_n$ share the edge $e_n=[v,w_n]$. Say $x_i\in T^{\pm 1}$ is assigned to $C_i$. If $x_1=x_2$ we did not attach a copy of $[0,N]\times [0,N]\times [0,1]$ (or two copies of $[0,N]\times [0,N]\times [0,1]$) to $e_1$ in the previous step and there is no action taken for $e_1$. If all $x_i$ are the same, then $v$ is already a manifold point. Say $e_i$ is an edge such that $x_i\ne x_{i+1}$. If $x_i\ne x_{i+1}^{\pm 1}$ then there is a free face $\bar F_i=[0,N]_i\times[0,N]_i\times \{0\}\subset B_1$ such that $Q(0,0,0)=v$, $Q(t,0,0)=v\cdot r_{x_i}(t)$ and $Q(0,t,0)=r_{x_{i+1}}(t)$ for $t\in [0,N]_i$. If $x_i=x_{i+1}^{-1}$ there are two free faces at $v$ and an additional $v\cdot r_z$ (see Figure \ref{Fig4}) to consider. In what follows, we will either attach copies of $[0,N]^3$ to these faces or ``fold" two such adjacent faces to complete our construction. 

Let $x_{j_1}=x_1$. Inductively, let  $j_i$ be the first integer following $j_{i-1}$ such that $r_{x_{j_i}}\ne r_{x_{j_{i-1}}}$. For simplicity, let $y_i=x_{j_i}$. Our ``picture" in $B_1$ (see Figure \ref{Fig5}) is a collection of free faces $F_i(=\bar F_{j_i})$ such that $Q(0,0,0)=v$, $Q(t,0,0)=v\cdot r_{y_i}(t)$ and $Q(0,t,0)=r_{y_{i+1}}(t)$ for $t\in [0,N]_i$. The point $v$ is the only point of the face $F_i$ in $V$. 
Here $y_i\ne y_{i+1}$ and $y_i\ne y_i^{-1}$ (since we consider both faces of Figure \ref{Fig4}, along with $v\cdot r_z$).  Suppose $y_1=y_3$. We fold the two faces $F_1$ and $F_2$ together in $B_1$ by identifying $(a,b,0)\in F_1$ with $(b,a,0)\in F_3$ (since $Q$ agrees on these two points). The interior points of $F_1$ (and $F_2$) become interior 3-manifold points of $B^3$. 
\begin{figure}
\vbox to 3in{\vspace {-2in} \hspace {.1in}
\hspace{-1 in}
\includegraphics[scale=1]{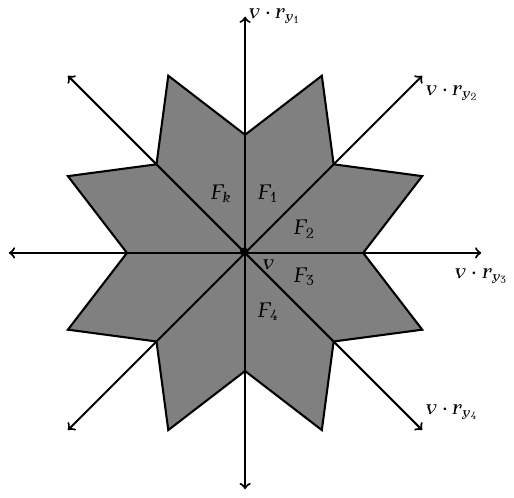}
\vss }
\vspace{-.1in}
\caption{Adjusting vertex neighborhoods} 
\label{Fig5}
\vspace{-.1in}
\end{figure}
If $F_1$ and $F_2$ are identified, then our new ``picture" is the same as Figure \ref{Fig5}, with $F_1$ and $F_2$ removed. (So we cycle around $v$ by beginning with face $F_3$ and ending with face $ F_k$.) This means we may assume not only that $y_i\ne y_{i+1}^{\pm1}$ but also that $y_i\ne y_{i+2}$. Next assume that $\{y_1,y_2,y_3\}$ generates a copy of $\mathbb Z^3$. Then we attach a copy of $A=[0,N]^3$ to $F_1\cup F_2$ so that $(0,0,0)$ is attached to $v$, $[0,N]\times [0,N]\times \{0\}$ is attached to $F_1$, $[0,N]\times \{0\}\times [0,N]$ is attached to $F_2$. Map this cube into $X$ in the obvious way, respecting the product structure. The three faces containing $(N,N,N)$ are mapped to $X-St^m(\ast)$ by the definition of $N$ (and will be part of the boundary of $B$). The face $\{0\}\times [0,N]\times [0,N]$ will now be a free face replacing $F_1$ and $F_2$. This face will have a side of length $N$ on each of $v\cdot r_{y_1}$ and $v\cdot r_{y_3}$. This sort of reduction allows us to assume (perhaps after relabeling) that no three (cyclically) consecutive $y_i$ generate a copy of $\mathbb Z^3$.  Combining this with the inequalities $y_i\ne y_{i+1}^{\pm1}$ and $y_{i+2}\ne y_i$ forces $y_{i+2}=y_i^{-1}$. In particular, the set of all remaining $y_i$ generate a copy of $\mathbb Z^2$. Choose $z\in T$ such that $z$ is not in the group generated by the $y_i$. Consider the shaded 2-disk $D=\cup_{i=1}^kF_i$ of Figure \ref{Fig5}. Let $E=D\times [0,N]$. Attach $E$ to $D$ by identifying $d$ with $(d,0)$ for all  $d\in D$. Extend $Q$ to map $E$ into $X$ respecting the product structure. 

The resulting space $B^3$ is a compact 3-manifold with boundary. To see that $B^3$ is contractible, first undo the last step by a strong deformation retraction of $E$ to $D$. For each of the copies of $[0,N]^3$, strong deformation retract these to the three faces containing $(0,0,0)$ (the other three faces are boundary faces of $B^3$). For the 3-cells of the form $[0,N]\times[0,N\times [0,1]$, use the first type of strong deformation retraction (to the four unshaded faces of Figures \ref{Fig2} or \ref{Fig4}). Next strong deformation retract copies of $F\times [0,J]$ (see Figure 3) to $D$. If a 2-dimensional face remains from the  $[0,N]\times[0,N\times [0,1]$
deformations of the first type then use the strong deformation retractions of the second type to deform these faces. Finally, for each 2-cell $C$ of $V$, strong deformation retract $C\times [0,N]$ to $C$. The result is $V$. The 3-manifold $B^3$ is contractible and hence a 3-ball. The set $\partial B-int(V)$ maps to $X-St^m(\ast)$ (by the construction). Hence $\alpha$ (a map of the boundary of $V$ to $X-St^{M_2(M_1(m)+W)}(\ast)$) is homotopically trivial in $X-St^m(\ast)$ and $X$ (and hence $G$) is simply connected at infinity. 

 \section{Presentations for Mapping Class Groups}\label{PMCG}
In order to see that our presentations of $F_{3,0}^0$ and $F_{4,0}^0$ satisfy the hypotheses of our theorem first consider $F_{3,0}^0$. We use a presentation of $F_{3,0}^0$ given in \cite{Ge01}. A generating set for this presentation is 
$$S=\{a_1,a_2,a_3,a_4, b,b_1,b_2, c_{i,j} \hbox{ for } 1\leq i,j\leq 4\ \ \  i\ne j\}.$$
We begin by listing non-commuting pairs.  The notation $(x:y)$ means that $x$ does not commute with $y$ and we define $(x:y_1,\ldots, y_n)=(x,y_1),\ldots, (x,y_n)$ so that $x$ does not commute with $y_i$ for all $i$.  Some caution here: we list $(b_1:a_2)$ but there are many more generators that $b_1$ does not commute with that occur later in our list. 

$(b:a_1,a_2,a_3,a_4)$,  \ \ $(b_1:a_2) \ \ (b_2:a_4),$ 

$(c_{1,2}, b_1),$\ \ $(c_{1,3}: a_2,c_{2,4}, c_{2,1})$,\ \ $(c_{1,4}:a_2,a_3,b_2,c_{2,1},c_{3,1},c_{3,2}, c_{4,2}, c_{4,3}),$

$(c_{2,1}: b_1, a_3,a_4, c_{3,2}, c_{4,2}),$\ \ $(c_{2,3}: b_1),$\ \ $(c_{2,4}:b_1,b_2,a_3, c_{3,1}, c_{3,2}, c_{4,3}),$

$(c_{3,1}: a_4, c_{4,2}, c_{4,3}), $\ \ $(c_{3,2}:a_4,a_1, b_1, c_{4,3}),$\ \ $(c_{3,4}: b_2),$

$(c_{4,1}, b_2),$\ \ $(c_{4,2}:b_2, a_1,b_1),$ \ \ $(c_{4,3}: b_2,a_1,a_2)$

There are three types of relations. The {\it commutation relations} of the form $xy=yx$ occur if $(x:y)$ does not appear above. The {\it Artin} relations $xyx=yxy$ holds if $(x:y)$ appears above. 
Finally there are {\it star} relations $c_{i,j}c_{j,k}c_{k,i}=(a_ia_ja_kb)^3$ for certain ``good" $(i,j,k)$. Here the term $c_{1,1}$ is allowed to occur and is defined as equal to 1. 
Let $R$ be the subset of the free group $F(S)$ consisting of all of these words. We are somewhat vague on the star relations, since our analysis does not rely on the specifics of these relations. In \cite{Ge01} a ``good" triple $(i,j,k)$ is defined and only star relations such that $(i,j,k)$ is good are actually used here. 
 An important aspect to this presentation is that $c_{1,2}$ (respectively $c_{3,4}$) commutes with all generators except $b_1$ (respectively $b_2$) and $c_{1,3}$ commutes with all generators except $a_2$, $c_{2,4}$ and $c_{2,1}$. Notice that all three of the generators $c_{1,2}$, $c_{3,4}$ and $c_{1,3}$ commute with one another. We show that the set $T=\{c_{1,2},c_{3,4},c_{1,3}\}$  satisfies the conditions of Theorem \ref{Main}.    
To each star relation we assign $c_{1,2}$. The element $c_{3,4}$ also commutes with all letters of each star relation and so conditions (1) and (2) are satisfied for all star relations.  Any other relation only involves 2-generators. We assign $c_{1,3}$ to $b_1b_2=b_2b_1$ to satisfy (1). If in condition (2) $s=b_1$ then let $z=c_{3,4}$ if $s=b_2$, let $z=c_{1,2}$. If a remaining relation $r$ does not contain $b_1$ then we assign $c_{1,2}$ to that relation. If in condition (2), $s=b_2$ let $z=c_{1,3}$. If $s\ne b_2$ then let $z=c_{3,4}$. Finally, if $b_1$ is a letter of $r(\ne b_1b_2b_1^{-1}b_2^{-1})$, then we assigned $c_{3,4}$ to $r$. For condition (2), if $s=b_1$ let $z=c_{1,3}$. If $s\ne b_1$ let $z=c_{1,2}$. 

The elements $c_{1,2}$, $c_{3,4}$ and $c_{1,3}$  appear in the \cite{Ge01}  presentation of each mapping class group of a closed surface of genus $\geq 3$ with the same non-commutation properties and so Theorem \ref{Main} applies to these groups as well.

\bibliographystyle{amsalpha}
\bibliography{paper1}{}

\end{document}